\newlength\mylen
\definecolor{mygray}{gray}{0.85}
\newcommand{\ol}{\overline}
\newcommand{\ul}{\underline}
\newcommand\+[1]{\mathcal{#1}}
\newcommand{\NN}{\mathbb N}
\renewcommand{\leq}{\leqslant}
\renewcommand{\geq}{\geqslant}
\newcommand{\mrm}[1]{\mathrm{#1}}
  \newcommand{\pp}[3]{\prod_{#2 \in #3}   {x_#2}^{{#1}_#2}} 
 \newcommand{\sub} {\subseteq}
\newcommand{\uhr}{\upharpoonright}
\renewcommand{\S}{\mathrm{Sym}(\mathbb N)}
\def\subsection{\@startsection{subsection}{3}%
	\z@{.5\linespacing\@plus.7\linespacing}{.3\linespacing}%
	{\bfseries\centering}}
\def\subsubsection{\@startsection{subsubsection}{3}%
	\z@{.5\linespacing\@plus.7\linespacing}{.3\linespacing}%
	{\centering}}
\def\myfnt{\ifx\protect\@typeset@protect\expandafter\footnote\else\expandafter\@gobble\fi}
\newcommand{\Gps}{\mathrm{GrpEpi}}
\newtheorem{theorem}{Theorem}[section]
\newtheorem{corollary}[theorem]{Corollary}
\newtheorem{lemma}[theorem]{Lemma}
\newtheorem{proposition}[theorem]{Proposition}
\newtheorem{question}[theorem]{Question}
\theoremstyle{plain}
\theoremstyle{definition}
\newtheorem{fact}[theorem]{Fact}
\newtheorem{definition}[theorem]{Definition}
\newtheorem{remark}[theorem]{Remark}
\newtheorem{notation}[theorem]{Notation}
\begin{document}

\begin{abstract} 
We prove that topological isomorphism on  procountable groups is not classifiable by countable structures, in the sense of descriptive set theory. In fact, the equivalence relation $\ell_\infty$    expressing  that two sequences of reals have a bounded difference is Borel reducible to it.  This marks substantial progress on an open problem of Kechris, Nies and Tent (2018):  to determine the exact  complexity of  the isomorphism  relation among all non-archimedean Polish groups.
\end{abstract}


\thanks{S.\ Gao acknowledges the partial support of their research by the Fundamental Research Funds for the Central Universities and by the National Natural Science Foundation of China (NSFC) grant 12271263. A.\ Nies acknowledges support by the Marsden Fund of New Zealand, UOA-346. Both S.\ Gao and A.\ Nies attended the Tianyuan Workshop on Computability Theory and Descriptive Set Theory in June 2025 when a part of this research was conducted, and we acknowledge the support by the Tianyuan Mathematics Research Center. 
G.\ Paolini was supported by project PRIN 2022 ``Models, sets and classifications", prot. 2022TECZJA and by INdAM Project 2024 (consolidator grant) ``Groups, Crystals and Classifications''.}

\title[Procountable groups are not CCS]{Procountable groups are  not classifiable   \\ by countable structures}

\author{Su Gao}
\address{School of Mathematical Sciences and LPMC, Nankai University, Tianjin 300071, P.R. China}
\email{sgao@nankai.edu.cn}

\author{Andr{\'e} Nies}
\address{School of Computer Science, The University of Auckland, New Zealand}
\email{andre@cs.auckland.ac.nz}

\author{Gianluca Paolini}
\address{Department of Mathematics ``Giuseppe Peano'', University of Torino, Via Carlo Alberto 10, 10123, Italy.}
\email{gianluca.paolini@unito.it}

\maketitle


\section{Introduction} It is often useful  to  have concrete presentations of  abstract   mathematical objects of a certain type. In this case one is interested in  the complexity of  whether two presentations present the same abstract object.   For instance,   finite graphs can be concretely presented by   adjacency matrices (written as binary strings).  The graph isomorphism problem asks whether two such  matrices   present the same graph. This   problem is  in NP, and its exact  complexity    has been the object of much recent study  \cite{Babai2016,Wiebking2019}. 
For many types of mathematical objects,  a  countably infinite amount of information is required to present a member.  In this case,  one can usually view the presentations as points in a so-called standard Borel space (such as the unit interval). Borel reducibility, introduced in \cite{FriedmanStanley1989}, is the standard tool for  comparing the  complexity of  equivalence relations on standard Borel spaces.  For instance, 
 countable graphs can be  concretely presented as symmetric, irreflexive relations on $\mathbb N$, and their isomorphism is a well-known benchmark equivalence relation~\cite{GaoBook}   denoted by GI.  The isomorphism relation for a class of concretely presented objects  is   reducible to GI  if and only there is a Borel assignment of  countable structures as complete invariants to      presentations of the  objects in question.


 \medskip 
 
 \noindent \textit{The main result.} A Polish group is a topological group with a Polish topology, namely, one that is  separable and completely metrizable. A topological  group is  called non-archimedean if it has a neighbourhood basis of the identity consisting of open subgroups \cite{BeckerKechris1996}. Such groups are totally disconnected. Let $\S$ denote the group of all permutations of $\mathbb N$ equipped with the pointwise convergence topology.  Then $\S$ is a non-archimedean Polish group. Up to topological isomorphism, the non-archimedean Polish groups  are precisely the closed subgroups of $\S$ \cite{BeckerKechris1996}.
 It is well known that the closed subgroups of  $\S$ form a standard Borel space; see for instance  Kechris et al. \cite{KNT}.  So the closed subgroups of $\S$ are concrete presentation of non-archimedean Polish groups.

 In the same article~\cite{KNT},  the authors initiated the following programme: 
\begin{quote}
Given a  Borel class $\+ C$ of closed subgroups of $\S$,     determine the     complexity  of  topological isomorphism between  groups in $\+ C$.  
\end{quote}
 Our work marks considerable progress on an open problem of Kechris et al.\ \cite{KNT} pertaining to their programme. 
  They  asked at the end of their introduction for  the complexity of isomorphism on    the class of \textit{all} non-archimedean Polish groups, observing that this relation is  analytic, and  GI is Borel reducible to it.  By our  main result, Theorem~\ref{th:main}  below, 
 the isomorphism relation on the Borel class of  procountable groups   already has a complexity much higher than~GI. 
  \begin{definition} \label{def:procountable}
	 A topological group   $G$ is called  \textit{procountable} if $G$ is topologically isomorphic to  the    inverse limit of  an inverse system $(G_n, p_n)_{n \in \mathbb N} $ of countable groups and onto homomorphisms $p_n \colon G_{n+1} \to G_n$.  The topology of $G$ is the subspace topology given by the product $\prod_n G_n$, where each $G_n$ is equipped with the discrete topology.  
\end{definition}
Procountable groups are of  interest because they generalise both profinite and discrete groups, and on the other hand they are precisely the non-archimedean   groups that have  small invariant neighbourhoods (SIN). See \cite{Pestov:99} for a discussion of  this important class of topological groups. It is easily seen that a group of the form $\prod_n G_n$, and hence each procountable group, is  isomorphic to a closed subgroup of $\S$. 
For background and further equivalent characterisations, see e.g., \cite{Malicki2016}.  We will verify in  \ref{prop: Borel pc}  that   the procountable closed subgroups of $\S$ form a  Borel set, using that the procountable groups  are   the non-archimedean groups with  a neighbourhood basis of the neutral element consisting of \textit{normal} open subgroups~(see, e.g., \cite[Theorem 3.3]{GX} or \cite[Lemma~2]{Malicki2016}). 

   Since each countable   discrete group is procountable,    by a  result of Mekler~\cite{Mekler1985}, GI is reducible to the isomorphism relation on 2-nilpotent procountable groups of exponent~$p$. 
It follows that the topological isomorphism relation of procountable groups is a complete analytic set and hence non-Borel.  

   Recall that $E_1$ is the Borel equivalence relation of eventual equality of  sequences of real numbers. By a celebrated theorem of Kechris and Louveau~\cite{KL}, $E_1$ is not Borel reducible to any orbit equivalence relation induced by a Borel action of a Polish group. It follows that any equivalence relation to which  $E_1$ is  Borel reducible has the same property. 
Recall that   $\ell_\infty$ is the Borel  equivalence relation on $\mathbb{R}^\mathbb{N}$ given by 
$$ (x_n)\ell_\infty(y_n)\iff \exists  M \in \mathbb R \ \forall n\in\mathbb{N}\ |x_n-y_n|<M. $$  
By  a result of Rosendal (see \cite[Theorem 8.4.2]{GaoBook}), $E_1$ is Borel reducible to $\ell_\infty$, and thus $\ell_\infty$ is not Borel reducible to the  orbit equivalence relation of any  Borel  action of a Polish group. The following is the main result of this paper.

 \begin{theorem}\label{th:main}
 Fix an odd prime $p$.	The equivalence relation $\ell_\infty$ is Borel reducible to  the topological isomorphism relation on  procountable groups. Consequently, the topological isomorphism relation on   procountable groups is not Borel reducible to any orbit equivalence relation induced by a Borel action of a Polish group. In particular, it is not Borel reducible to GI.
 \end{theorem}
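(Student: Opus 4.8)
The plan is to factor the reduction through a purely combinatorial equivalence relation on integer sequences and then to realise that relation by a \emph{graded} version of Mekler's group construction carried out inside an inverse limit, so that the procountable topology itself records the level structure.

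First I would replace $\ell_\infty$ by an equivalent relation on sequences of natural numbers. Since $|x_n-y_n|<M$ holds for all $n$ (for some $M$) exactly when the integer parts satisfy $\sup_n|\lfloor x_n\rfloor-\lfloor y_n\rfloor|<\infty$, the Borel map $(x_n)\mapsto(\lfloor x_n\rfloor)$ reduces $\ell_\infty$ to the relation $E_{\sup}$ on $\mathbb Z^{\mathbb N}$ given by $(a_n)\,E_{\sup}\,(b_n)\iff\sup_n|a_n-b_n|<\infty$; splitting each term into its positive and negative parts and interleaving lets me further assume the sequences lie in $\mathbb N^{\mathbb N}$. It therefore suffices to Borel-reduce $E_{\sup}$ on $\mathbb N^{\mathbb N}$ to topological isomorphism of $2$-nilpotent procountable groups of exponent $p$.

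Next, to each $\bar a=(a_n)\in\mathbb N^{\mathbb N}$ I would associate a procountable group $G_{\bar a}=\varprojlim_m G_{\bar a}/U_m$ built along a canonical descending chain $G_{\bar a}=U_0\supseteq U_1\supseteq\cdots$ of open normal subgroups, designed so that the quotient data at level $n$ encodes the single number $a_n$. The raw material at each level would be a finite ``suitable'' graph $\Gamma(a_n)$ of the type required by Mekler, whose isomorphism type records $a_n$, together with a rigid backbone that distinguishes the levels; applying Mekler's functor \cite{Mekler1985} level by level and completing yields a $2$-nilpotent group of exponent $p$ whose open normal subgroups furnish the topology, and the assignment $\bar a\mapsto G_{\bar a}$ is visibly Borel (one reads off a countable dense subgroup as in the remarks preceding the theorem). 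The design principle is that the level filtration $(U_m)$ is recorded by the topology, while within each level the number $a_n$ is recorded by an $\mathbb F_p$-rank, equivalently by the ``size'' of a Mekler gadget. I would then prove two implications. For the easy direction, given $\sup_n|a_n-b_n|\le c$ I would glue, level by level, isomorphisms between the level-$n$ gadgets of sizes $a_n$ and $b_n$, absorbing the bounded surplus or deficit into neighbouring levels; since the per-level distortion is uniformly bounded by $c$, these local maps assemble into a single $\phi\colon G_{\bar a}\to G_{\bar b}$ that is continuous with continuous inverse, hence a topological isomorphism. For the hard direction I would start from a topological isomorphism $\phi$, use uniform continuity of $\phi$ and $\phi^{-1}$ against the two filtrations to show that $\phi$ respects the level filtration with uniformly bounded distortion, and then feed this back through the recovery of the graph from the group (the topological analogue of the interpretability of Mekler's graph in its group) to force $\sup_n|a_n-b_n|<\infty$.

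The main obstacle I anticipate is exactly this rigidity in the hard direction: a topological isomorphism between inverse limits is a priori free to mix the levels in complicated, level-dependent ways, and everything turns on showing that continuity of \emph{both} $\phi$ and $\phi^{-1}$ imposes a single uniform bound on how far it can distort the per-level encoding — it is this uniform modulus of continuity that must be converted into the uniform constant $M$ witnessing $\ell_\infty$. Establishing it requires a careful topological refinement of Mekler's correspondence, guaranteeing that the graph-theoretic data, and hence the numbers $a_n$, can be read off the group stably under continuous isomorphism up to a bounded error, and that no unbounded rearrangement of the levels can be realised continuously. The most delicate design choice will be calibrating the gadgets so that the resulting relation is precisely \emph{bounded difference}, rather than the finer relation of exact equality or the coarser relation of eventual equality.
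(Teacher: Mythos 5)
Your outline has the right overall shape---it is, in effect, an attempt to do in one step what the paper does in two (the paper first Borel-reduces $\ell_\infty$ to uniform homeomorphism of path spaces $[T]$ of pruned trees with the standard ultrametric, and then realises each tree as an inverse limit $G_T=\varprojlim_n L(T_{n+1})$ of Mekler-type $2$-nilpotent exponent-$p$ groups graded by the tree levels)---but the two items you defer as ``anticipated obstacles'' are not loose ends: they are the entire content of the proof, and your design as stated breaks at both. Consider first your easy direction. If the isomorphism type of the level-$n$ quotient records $a_n$, then for $a_n\neq b_n$ there are no ``isomorphisms between the level-$n$ gadgets'' to glue. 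What a topological isomorphism of such inverse limits actually amounts to is a pair of mutually inverse \emph{level-shifted} compatible families of epimorphisms $f_n\colon A_{\phi(n)}\to B_n$ and $g_n\colon B_{\psi(n)}\to A_n$ (this is the paper's $\mathrm{Pro}_\omega(\mathrm{GrpEpi})$ correspondence, which uses that procountable groups have a basis of open \emph{normal} subgroups), and for such families to exist under a mere bounded-difference hypothesis, the encoding must convert a level shift into an additive error in the encoded integer. That is exactly the calibration you call ``the most delicate design choice'' but never make. In the paper, $x(k)$ is encoded by a $2^{x(k)}$-fold splitting followed by full binary trees (the gadget $\mathsf{R}_{x(k)}$), attached at \emph{infinitely many} levels along a branch $\mathsf{z}_k$ made topologically rigid by Cantor--Bendixson gadgets; gadgets of sizes $a$ and $b$ are then uniformly homeomorphic at the cost of a level shift of exactly $|a-b|$ (Lipschitz constant $2^{|a-b|}$), which is uniformly bounded precisely when the sequences are $\ell_\infty$-equivalent. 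With one gadget per level whose isomorphism type pins $a_n$ and with onto binding maps, surjectivity at matching indices pins the type exactly, so the natural reading of your construction captures equality (or eventual equality) of the sequences, not bounded difference; if instead you blur the levels to avoid this, you lose the lower bound in the hard direction.

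For the hard direction, two concrete mechanisms are needed that ``a topological refinement of Mekler's correspondence'' does not supply by itself. First, you must pass from a topological isomorphism to the level-shifted systems above \emph{together with} the identities $g_n\circ f_{\psi(n)}=$ (binding map) and $f_n\circ g_{\phi(n)}=$ (binding map); these composition identities, not continuity alone, are what ultimately force the recovered combinatorial maps to invert each other. Second, you must recover a map of index sets from a group epimorphism between levels. Mekler's interpretability of the graph in $G(A)$ is a statement about countable discrete groups built on ``nice'' graphs and does not apply to epimorphisms between inverse-limit levels; the paper sidesteps it by using the simplest possible graph, a perfect matching---the level groups are $L(X)$, free $2$-nilpotent of exponent $p$ on $\{a_v,b_v\colon v\in X\}$ modulo $[a_v,b_v]=e$---and proving that any commuting pair of $L(X)$ generates, modulo the centre, either a subgroup of rank at most $1$ or $\langle \underline{a}_v,\underline{b}_v\rangle$ for a unique $v$. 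This lemma turns the relevant epimorphisms functorially into maps of vertex sets, hence of tree levels, and only then does a counting argument (at most $2^{y(k)+M-m+1}$ nodes available to receive $2^{x(k)}$ nodes, with the shift $M-m+1$ independent of $k$) produce the uniform bound $M$. Finally, a structural constraint your sketch leaves open: if the backbone and gadgets at every level are finite, then $G_{\bar a}$ is profinite, and by Kechris--Nies--Tent the isomorphism relation on profinite groups is Borel reducible to GI, to which $\ell_\infty$ does not reduce; so the level quotients must be genuinely countably infinite, which your ``finite suitable graph $\Gamma(a_n)$'' plus an unspecified backbone does not guarantee.
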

 
It is known that being Borel reducible to GI is equivalent to being below the orbit equivalence relation of a Borel  action of $\S$, which is also equivalent to being classifiable by some sort of countable structures. It follows from our main theorem that the topological isomorphism of procountable groups  is not classifiable by countable structures.  

Kechris et al.\ \cite[Theorem 3.1]{KNT} provided a criterion when the topological isomorphism relation on a class  $\+ C$  is Borel reducible to  GI; intuitively, it says that one can canonically and in a Borel way assign to $G \in \+ C$ a countable neighbourhood basis of the identity consisting of open subgroups.  By our main theorem, the hypothesis of this criterion is violated in our setting: 
\begin{corollary}
{\rm Let $\mathcal{C}$ be the Borel class of all  procountable groups. There is no    assignment $G\mapsto \mathcal{N}_G$ such that  for each $G\in \mathcal{C}$, $\mathcal{N}_G$ is a countable set of open subgroups of $G$ that form a neighborhood basis of $1_G$, and this assignment is Borel and isomorphism invariant in the sense of \cite[Theorem~3.1]{KNT}. }
\end{corollary}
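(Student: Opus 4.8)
The plan is to obtain the corollary as the contrapositive of the criterion of Kechris, Nies and Tent, fed with the non-reducibility established in Theorem~\ref{th:main}. First I would recall the exact shape of \cite[Theorem 3.1]{KNT}: for a Borel class $\+ C$ of closed subgroups of $\S$, if one can assign to each $G \in \+ C$ a countable neighbourhood basis $\+ N_G$ of $1_G$ consisting of open subgroups, in a way that is Borel and isomorphism invariant in their precise sense, then the topological isomorphism relation restricted to $\+ C$ is Borel reducible to GI. I would note that this criterion applies to $\+ C = \+ G_{2,p}$, since $\+ G_{2,p}$ is Borel by the argument given in the introduction (the procountable closed subgroups of $\S$ satisfying a fixed law form a Borel set, via passing to a canonically chosen countable dense subgroup).

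Next I would run the argument by contradiction. Suppose an assignment $G \mapsto \+ N_G$ as in the statement existed on $\+ G_{2,p}$. Then \cite[Theorem 3.1]{KNT} would yield a Borel reduction of topological isomorphism on $\+ G_{2,p}$ to GI. On the other hand, GI is the orbit equivalence relation of the natural Borel action of $\S$ on the space of graphs on $\mathbb N$, hence in particular an orbit equivalence relation of a Borel action of a Polish group. But Theorem~\ref{th:main} states precisely that topological isomorphism on $\+ G_{2,p}$ is \emph{not} Borel reducible to any such orbit equivalence relation, and in particular not to GI. This is the contradiction, and it forces the non-existence of the assignment.

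The point to emphasise is that there is no genuine obstacle left at this stage: all of the difficulty is already packaged into Theorem~\ref{th:main} (through the Borel reduction of $\ell_\infty$ to topological isomorphism and the Kechris--Louveau theorem ruling out $E_1$ below orbit equivalence relations of Polish-group actions) and into the criterion \cite[Theorem 3.1]{KNT}. The only care required is bookkeeping: confirming that $\+ G_{2,p}$ really is a Borel subset of the space of closed subgroups of $\S$, and that the notions of ``Borel'' and ``isomorphism invariant'' assignment appearing in the corollary match verbatim those in the hypothesis of \cite[Theorem 3.1]{KNT}. Both are immediate from the framework set up above, so the corollary follows.
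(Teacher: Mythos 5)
Your proposal is correct and matches the paper's own reasoning: the corollary is obtained exactly as the contrapositive of the criterion in \cite[Theorem 3.1]{KNT}, with the contradiction supplied by Theorem~\ref{th:main} (topological isomorphism on $\mathcal{G}_{2,p}$ is not Borel reducible to GI), and the Borelness of $\mathcal{G}_{2,p}$ handled as in the introduction. No differences of substance.
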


%
%
%

The proof of our main theorem makes use of the notion of uniform homeomorphism.
		\begin{definition} \label{def:UH} 
	A \emph{Polish metric space} is a metric space $(X, d)$ where $d$ is a complete metric and the topology generated by $d$ is Polish. Let $X$ and $Y$ be Polish metric spaces. A bijection  $\Phi$ between  $X$ and $Y$ is called a \emph{uniform homeomorphism} if   both $\Phi$ and its inverse are uniformly continuous.  $X$ and $Y$ are said to be \emph{uniformly homeomorphic} if there is a uniform homeomorphism between $X$ and $Y$. 
	\end{definition}

	\noindent \textit{The two steps of the proof  of the main result \ref{th:main}}.  
\begin{enumerate} \item[1.]  Section~\ref{s:1}   shows that $\ell_\infty$ is Borel reducible to the relation of uniform homeomorphism between path spaces $[T]$,  where $T$ is a pruned subtree of $\mathbb{N}^{< \mathbb{N}}$ and $[T]$ carries the standard ultrametric. \item[2.]  Section~\ref{s:2}  reduces this uniform homeomorphism relation to the topological isomorphism relation on the Borel space of procountable   groups.
\end{enumerate} 
 
  The second step only uses a very particular type of groups that could be called  ``solenoid groups". All the  groups  $G_n$ in the inverse system $(G_n, p_n)_{n \in \mathbb N} $ are  isomorphic.  Each group $G_n$ is generated by a set $V_n$, and   the binding maps $p_n$ are determined  by surjections $V_{n+1} \to V_n$.

%

\medskip 
\noindent {\it Some   background on the program of \cite{KNT}.}
For context  we summarize other  results    known to us that pertain to the program.
One class  considered in \cite{KNT} is  the class of compact non-archimedean Polish groups, or equivalently, the profinite groups. It was  shown that its   isomorphism relation is Borel bi-reducible with~GI. This implies that this isomorphism relation is a complete analytic set, and hence non-Borel.  
One can also consider the subclass of abelian profinite groups. Due to   Pontryagin duality, the isomorphism relation between abelian profinite groups is Borel bi-reducible with
the isomorphism relation between countable torsion abelian groups. The  latter relation is classified by   Ulm invariants; it is also   a complete analytic set and hence non-Borel (see \cite[(2.1)]{FriedmanStanley1989}). 
In terms of Borel reducibility, it is strictly above the smooth equivalence relation and strictly below~GI. In particular,
it is not above the equivalence relation~$E_0$.
The isomorphism relation of  oligomorphic groups was shown to be essentially countable in \cite{NiesSchlichtTent2022};  it is at present an open question whether this relation is   smooth (i.e.,    real numbers can be assigned as invariants); for partial results on this problem,  see \cite{NP_oligo, reconstruction2}; in particular in \cite{reconstruction2} it was shown that under the assumption of weak elimination of imaginaries the relation is smooth.
The isomorphism relation of extremely amenable non-archimedean Polish groups was studied in \cite{EGL}, where it is shown that GI is Borel reducible to it.  

We note that the programme of \cite{KNT}     asked to first determine whether a given class of interest~$\+ C$ of closed subgroups of $\S$   is indeed Borel. While this is often straightforward from the definition, in some cases it   takes work to establish.    Borelness is unknown for the class of topologically finitely generated groups, and known to fail   for some  classes of interest. For instance,  the class of all CLI groups (those admitting a compatible complete left-invariant metric) was shown to be properly co-analytic   by   Malicki~\cite{Ma}.   Paolini and Shelah~\cite{PS_polish} proved  that four  properties  of closed abelian subgroups of $\S$ are  complete co-analytic and therefore non-Borel: separability (in the sense of abelian group theory), torsionlessness, $\aleph_1$-freeness and $\mathbb{Z}$-homogeneity. 

Countable groups are up to isomorphism the  discrete closed subgroups $G$ of $\S$, and being discrete is Borel because it means that  the identity permutation is isolated in~$G$. If a class of discrete subgroups of $\S$  is  Borel, its   isomorphism relation is Borel reducible to GI. The question   is whether conversely,  GI   reduces to the  isomorphism relation on the class. 
Paolini and Shelah~\cite{PS}
showed it for  countable torsion-free abelian groups.  Most recently, Gao and Li~\cite{GL}
did so for countable omnigenous locally finite groups (an uncountable class of groups, each generalising  Hall's universal locally finite group).

\medskip

\noindent {\it Open questions.} An analytic equivalence relation on a Borel space is called  universal  analytic if each analytic equivalence relation is Borel reducible to~it.

	\begin{question}\label{universal_question} Is the equivalence  relation of topological isomorphism on procountable groups  universal analytic? 
\end{question}
\noindent 

It is known that    the relation of uniform homeomorphism on Polish metric spaces is universal analytic~\cite{FLR}.
\begin{question}\label{universal_question_+} Is the  equivalence relation of uniform homeomorphism on Polish \emph{ultra}metric  spaces  universal analytic?
\end{question}
\noindent  By the second step above, an affirmative answer to the second  question would also answer the first   in the affirmative.    The preceding two  questions will be discussed again in see Section~\ref{Sec_fr}.

	\begin{question} Are the abelian procountable groups classifiable by countable structures?
\end{question}
\noindent
Notice that it follows from \cite{PS} that graph isomorphism is a lower bound for the complexity of  this classification problem, so an affirmative answer  would fully determine the complexity.

\section{Borel reduction of $\ell_\infty$ to uniform homeomorphism} \label{s:1}

\begin{definition} \label{def:tree} 
	By a    \emph{tree on $\mathbb{N}$} we mean a  nonempty set $T\subseteq \mathbb{N}^{<\mathbb{N}}$ that is closed under initial segments. The path space 
	$$ [T]=\big\{ x\in \mathbb{N}^\mathbb{N}\colon \forall n\in\mathbb{N}\ (x\!\!\upharpoonright\! n\in T)\big\} $$
	is a Polish ultrametric space with the metric given     by declaring for  $x\neq y$ that 
	$$ d(x,y)=2^{-n}, \mbox{ where $n\in\mathbb{N}$ is the least such that $x(n)\neq y(n)$.} $$
	The elements of $[T]$ will be called  {\em branches} of $T$.
\end{definition}
 A tree   $T$ is called   \emph{pruned} if for any $s\in T$ there is $t\in T$ such that $s\subsetneq t$.  The space of   all pruned trees on $\mathbb{N}$  can be seen to be a standard Borel (even Polish) subspace of $2^{(\mathbb{N}^{<\mathbb{N}})}$, which will be denoted~$\mathcal{T}$.  
 
 Note that  the paths ultrametric spaces are in a canonical bijection with the pruned trees, so we can see the elements of $\mathcal T$ as codes for such spaces. 
 
 \begin{definition} \label{def:cong u}
 	By  $\cong_u$ we   denote the equivalence relation of  uniform  homeomorphism on $\mathcal{T}$.
 \end{definition}

The following quick observation   gives a lower bound on the complexity of $\cong_u$. 
\begin{proposition} \label{prop:quicko}
	$\mbox{\rm GI}$ is Borel reducible to $\cong_u$.
\end{proposition}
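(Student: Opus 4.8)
\textit{Proof proposal.} The plan is to construct a Borel map $R \mapsto T_R$ sending a countable combinatorial object $R$ to a pruned tree on $\mathbb{N}$, so that $R \cong R'$ if and only if $[T_R] \cong_u [T_{R'}]$. Rather than code graphs directly, I would reduce from the isomorphism relation on countable \emph{rooted trees}, which is Borel bireducible with $\mathrm{GI}$ by the coding of \cite{FriedmanStanley1989} (the usual rigid markings: each vertex is made identifiable by a distinctive subtree gadget, and edges are encoded by nodes carrying copies of the gadgets of their endpoints). It then suffices to reduce rooted-tree isomorphism to $\cong_u$, so $R$ below denotes such a rooted tree.

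The design principle is dictated by the nature of uniform homeomorphism. Uniform continuity of $\Phi$ and of $\Phi^{-1}$ are conditions on the behaviour of $d$ as $d \to 0$, so the relation $\cong_u$ on path spaces is sensitive \emph{only} to arbitrarily small scales, and it is invariant under monotone reparametrization of the level function: lengthening single-child segments, or shifting the level at which a node splits, does not change the uniform-homeomorphism type (indeed on the full binary tree the substitution $d \mapsto d^2$ is a uniform self-homeomorphism). Consequently the combinatorics of $R$ cannot be stored in the shallow, bounded-below-distance structure, nor in the exact scale of any single feature. The key idea is therefore to make $[T_R]$ \emph{self-similar}, so that a copy of the coding of $R$ is visible inside every ball, and to arrange that $R$ is recovered from \emph{scale-invariant} data of $[T_R]$ — for instance from local (non-)compactness and Cantor--Bendixson-type invariants of the small balls and of how they are nested, all of which are preserved by any homeomorphism and a fortiori by any uniform homeomorphism.

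The forward direction should be routine. I would define $T_R$ from $R$ by a level-preserving, equivariant recipe, so that an isomorphism $R \cong R'$ induces an isomorphism of the trees $T_R, T_{R'}$ (up to reordering children), hence a bijection of branches preserving the least level of first difference, i.e.\ an isometry $[T_R] \to [T_{R'}]$. Since every isometry is a uniform homeomorphism, this gives $[T_R] \cong_u [T_{R'}]$.

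The hard part, and the step I expect to be the main obstacle, is the converse: $[T_R] \cong_u [T_{R'}] \Rightarrow R \cong R'$. The tool I would use is that a uniform homeomorphism $\Phi \colon [T_R] \to [T_{R'}]$ sends, for each $m$ and a suitable $k=k(m)$, every radius-$2^{-k}$ ball into a single radius-$2^{-m}$ ball, and symmetrically for $\Phi^{-1}$; this yields coherent surjections between the level partitions of the two branch spaces that \emph{interleave} the two ball-trees up to a bounded level shift. The crux is a robustness lemma showing that the self-similar gadgets encoding $R$ survive this interleaving — i.e.\ they are detected by invariants insensitive to the reparametrization and reshuffling-within-balls that a uniform homeomorphism is free to perform — so that reading the induced interleaving at small scales produces a genuine isomorphism $R \cong R'$. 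Taming this reparametrization freedom, and proving that the scale-invariant local invariants faithfully recover the marked tree structure, is where essentially all the difficulty lies.
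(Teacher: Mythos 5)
There is a genuine gap: your converse direction is not a proof but a statement of intent. Everything hinges on your ``robustness lemma'' --- that the self-similar gadgets encoding $R$ survive the bounded-level interleaving induced by a uniform homeomorphism --- and you neither formulate it precisely nor prove it; you explicitly flag it as ``where essentially all the difficulty lies.'' Since the whole point of the proposition is to control what an arbitrary uniform homeomorphism can do to the coding, leaving exactly that step open means the reduction is unverified. Moreover, your self-similarity design creates a tension you do not resolve: to make the coding readable ``inside every ball'' you need non-compact, infinitely branching configurations (so that Cantor--Bendixson and local-compactness invariants can distinguish gadgets), but it is precisely in the non-compact setting that uniform homeomorphism differs from homeomorphism and your reparametrization worries have teeth. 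Taming that is real work --- it is essentially the kind of $\Omega_\Phi$-counting analysis the paper needs later for the much harder $\ell_\infty$ reduction --- and it is not needed here at all.

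The idea you missed is that \emph{compactness makes the problem trivial}: on compact metric spaces every homeomorphism is automatically a uniform homeomorphism (Heine--Cantor), so for compact path spaces $\cong_u$ coincides with homeomorphism. The paper's proof simply quotes the Camerlo--Gao theorem that GI is Borel bireducible with homeomorphism of $0$-dimensional compact metric spaces, fixes in a Borel way a compatible ultrametric on each such space, normalizes the distance set into $\{2^{-n}\colon n\in\mathbb{N}\}$, and realizes the resulting compact ultrametric space as $[T_\Gamma]$ for a (finitely branching, hence compact) pruned tree $T_\Gamma$. Then $\Gamma\cong\Gamma'$ iff $\Phi(\Gamma)$ and $\Phi(\Gamma')$ are homeomorphic iff $[T_\Gamma]$ and $[T_{\Gamma'}]$ are homeomorphic iff they are uniformly homeomorphic --- the last equivalence by compactness. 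Your hard direction never arises: there is no reparametrization freedom to tame, because every homeomorphism between the coding spaces is already uniform. If you want to rescue your approach, you should either restrict your construction to finitely branching trees (at which point you are reproving Camerlo--Gao) or actually prove the robustness lemma, which is a substantial undertaking.
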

\begin{proof}
	We use the main result of  \cite{CG},  where it was shown that GI is Borel bireducible with the homeomorphism relation between $0$-dimensional compact metric spaces. Let $\Phi$ denote a Borel witness to this reduction. Then for any graphs $\Gamma, \Gamma'$ with vertex set $\mathbb{N}$, $\Gamma$ and $\Gamma'$ are isomorphic if and only if $\Phi(\Gamma)$ and $\Phi(\Gamma')$ are homeomorphic. Note that a Polish space is $0$-dimensional if and only if it admits a compatible ultrametric. So there exists a compatible ultrametric $d_\Gamma$ for $\Phi(\Gamma)$. By the standard tools from descriptive set theory, we may find a Borel map giving the correspondence $\Phi(\Gamma)\mapsto d_\Gamma$. 
	
\smallskip \noindent Note that for any compact ultrametric space $(X, d_X)$, the set of nonzero distances $\{d_X(x,y)\colon x,y\in X\}\setminus\{0\}$ is a decreasing sequence of positive real numbers converging to $0$. Up to homeomorphism, we may assume that this sequence is $\{2^{-n}\colon n\in\mathbb{N}\}$. Now that $(\Phi(\Gamma), d_\Gamma)$ is a compact ultrametric space with its set of nonzero distances a subset of $\{2^{-n}\colon n\in\mathbb{N}\}$, there is a unique tree $T_\Gamma\in\mathcal{T}$ such that $(\Phi(\Gamma), d_\Gamma)$ is isometric to $[T_\Gamma]$. Again the map $d_\Gamma\mapsto T_\Gamma$ is Borel.

\smallskip \noindent Finally, note that, as compact spaces, $[T_{\Gamma}]$ and $[T_{\Gamma'}]$ are homeomorphic if and only if they are uniformly homeomorphic.  Thus we have that $\Gamma$ and $\Gamma'$ are isomorphic if and only if $[T_\Gamma]$ and $[T_{\Gamma'}]$ are uniformly homeomorphic as compact ultrametric spaces. This proves the proposition. 
\end{proof}	

It follows that $\cong_u$ is a complete analytic set and hence non-Borel. 
 
 The rest of this section will establish the following result of independent interest, which is the first step towards proving our Main Result~\ref{th:main}.
 \begin{theorem} \label{th:1} $\ell_\infty$ is Borel reducible to $  \cong_u$, namely, uniform homeomorphism of  the ultrametric  spaces denoted by points in $\mathcal T$. Consequently, $\cong_u$ is not Borel reducible to any orbit equivalence relation induced by a Borel action of a Polish group. In particular, $\cong_u$ is not Borel reducible to $\mbox{\rm GI}$.
 \end{theorem}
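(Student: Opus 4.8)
The plan is to construct, for each sequence $x = (x_n) \in \mathbb{R}^{\mathbb{N}}$, a pruned tree $T_x \in \mathcal{T}$ in a Borel fashion so that the \emph{growth rate} of the tree encodes the sequence, and so that a uniform homeomorphism between $[T_x]$ and $[T_y]$ exists precisely when the sequences are $\ell_\infty$-equivalent. The guiding intuition is that uniform continuity is governed by the moduli of continuity, i.e.\ by how finely one must look (at what level $n$, corresponding to distance $2^{-n}$) to separate points at a given scale. If the branching behaviour of $T_x$ at level $n$ is tied to the real $x_n$ via something like a count of $\lceil 2^{x_n} \rceil$ many immediate successors (or a comparable monotone coding), then matching up balls of radius $2^{-n}$ in $[T_x]$ with balls of radius $2^{-m}$ in $[T_y]$ under a uniformly continuous bijection will force the \emph{difference} of the relevant levels to stay bounded, which is exactly the content of $\ell_\infty$.

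First I would pin down the coding map. A clean choice is to make $[T_x]$ a product-like space whose ball of radius $2^{-n}$ splits into roughly $2^{\lceil x_n \rceil}$ (suitably truncated to keep things positive and integer-valued) sub-balls of radius $2^{-(n+1)}$; I would normalise so the map $x \mapsto T_x$ is clearly Borel, which is routine since each $x_n$ determines finitely much of the tree at each level. Second, and this is where the two directions split, I would prove the two implications of the reduction. For the forward direction, assuming $(x_n)\,\ell_\infty\,(y_n)$ with bound $M$, I would build an explicit uniform homeomorphism $\Phi \colon [T_x] \to [T_y]$ by a level-by-level matching of successors; the bound $M$ guarantees the branching factors at corresponding levels differ by at most a fixed multiplicative constant ($2^M$), which lets one define $\Phi$ with a uniform modulus of continuity (and likewise for $\Phi^{-1}$), since a uniformly bounded ratio of branching numbers translates into a uniform bound on how many levels $\Phi$ can shift distances.

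The converse is where the main obstacle lies. Given a uniform homeomorphism $\Phi$, with uniform-continuity moduli $\delta \mapsto \epsilon$ for $\Phi$ and $\Phi^{-1}$, I must extract a single bound $M$ controlling $|x_n - y_n|$ for \emph{all} $n$ simultaneously. The key is a counting/entropy argument: uniform continuity of $\Phi$ means that balls of radius $2^{-n}$ are mapped into balls of some radius $2^{-m(n)}$ with $|n - m(n)|$ bounded uniformly (this is precisely what uniform continuity of both $\Phi$ and its inverse buys, as opposed to mere continuity), and then comparing the number of radius-$2^{-(n+1)}$ sub-balls inside a radius-$2^{-n}$ ball on each side — which encodes $x_n$ versus $y_{m(n)}$ — forces the branching counts to agree up to a bounded factor. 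One must be careful that the coding is \emph{robust}: the local branching structure of $[T_x]$ at level $n$ should be recoverable from the metric space $[T_x]$ alone in a way that is stable under the level-shift $n \mapsto m(n)$, so that the extracted inequality reads $|x_n - y_n| < M$ after absorbing the bounded shift into $M$. I expect the delicate point to be handling the interaction between the (bounded) level-shift introduced by $\Phi$ and the per-level branching encoding of $x_n$, ensuring these combine into a single uniform additive bound rather than an $n$-dependent one; making the coding sufficiently rigid (e.g.\ so that every ball at a given level has the same branching number, depending only on the level) is what makes this extraction go through. The final sentence of the theorem is then immediate from Proposition~\ref{prop:quicko} together with the cited Kechris--Louveau and Rosendal results: since $\ell_\infty$ is not Borel reducible to any Polish-group orbit equivalence relation, neither is any relation it reduces to, and in particular $\cong_u \not\leq_B \mathrm{GI}$.
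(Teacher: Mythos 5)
Your converse direction rests on a false claim, and the failure is fatal to the whole reduction. You assert that uniform continuity of $\Phi$ and $\Phi^{-1}$ forces balls of radius $2^{-n}$ to map into balls of radius $2^{-m(n)}$ with $|n-m(n)|$ \emph{bounded}. Uniform continuity provides no such thing: it only gives, for each $n$, some level $m=\Omega_\Phi(n)$ (independent of the point) such that agreement to level $m$ maps to agreement to level $n$, and $\Omega_\Phi(n)-n$ may grow arbitrarily fast. A bounded level shift is essentially a bi-Lipschitz condition, which is strictly stronger. Worse, your coding produces trees with finite branching at every level, so the spaces $[T_x]$ are \emph{compact}; whenever they are perfect they are all homeomorphic to the Cantor set (Brouwer), and since continuous maps on compact metric spaces are automatically uniformly continuous --- a fact the paper itself invokes in Proposition~\ref{prop:quicko} --- they are all mutually \emph{uniformly} homeomorphic, regardless of $x$. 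Concretely: the tree with constant branching $2$ and the tree with branching $2^{2^n}$ at level $n$ are uniformly homeomorphic via the bit-concatenation bijection, even though the per-level branching counts differ unboundedly (the level shift of this map is unbounded, which is permitted). So, up to degenerate cases, your map sends all of $\mathbb{R}^{\mathbb{N}}$ into a single $\cong_u$-class, and no counting or ``rigidity'' refinement of a per-level, uniform-branching coding can repair this: making every ball at a given level branch identically is precisely what makes the space compact and homogeneous, hence indistinguishable in the uniform category.

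The paper's construction avoids this collapse in two essential ways, and these are the missing ideas. First, the spaces $[\mathsf{T}_x]$ are non-compact (each contains a closed copy of Baire space $\mathbb{N}^{\mathbb{N}}$ via the spine $\mathsf{S}_*$), so uniform homeomorphism is genuinely finer than homeomorphism there; indeed the paper observes that all the $[\mathsf{T}_x]$ are homeomorphic to one another. Second, the coordinate $x(k)$ is not coded at level $k$: it is coded by attaching copies of a fixed finitely-branching tree $\mathsf{R}_{x(k)}$ at \emph{infinitely many levels} along a distinguished branch $\mathsf{z}_k$, and the branches $\mathsf{z}_k$ are made topologically rigid via a Cantor--Bendixson characterization, so any homeomorphism must send $\mathsf{z}_k(\mathsf{T}_x)$ to $\mathsf{z}_k(\mathsf{T}_y)$ (coordinates cannot be permuted or lost). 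Because the markers for coordinate $k$ recur at all scales, the counting argument can be run at a single pair of levels determined once and for all from the moduli $\Omega_\Psi$ and $\Omega_{\Psi^{-1}}$, and that same choice works for \emph{every} $k$ simultaneously; this is what converts a possibly wildly growing modulus into a single additive bound on $|x(k)-y(k)|$. Your forward direction (bounded difference yields $2^M$-Lipschitz matchings) is in the same spirit as the paper's, but without spatial separation of coordinates and repetition at all scales, the converse cannot be recovered.
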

 
\subsection{Some facts on uniform homeomorphisms}  
 The following is a direct consequence of the definition \ref{def:tree} of the metric on path spaces, and the definition of  uniform continuity.
 
 \begin{lemma}\label{lem:basic} Let $T, S$ be pruned subtrees of $\mathbb{N}^{<\mathbb{N}}$ and let $\Phi\colon [T]\to [S]$. Then $\Phi$ is uniformly continuous if and only if 
 \[
 \forall n\in\mathbb{N}\;\exists m\in\mathbb{N}\;\forall x,y\in[T]\;
 \bigl(x\!\upharpoonright\! m = y\!\upharpoonright\! m
 \;\rightarrow\;
 \Phi(x)\!\upharpoonright\! n = \Phi(y)\!\upharpoonright\! n\bigr).
 \]

 \end{lemma}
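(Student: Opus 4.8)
The plan is to unwind both the definition of uniform continuity and the definition of the path-space metric, exploiting that the ultrametric on $[T]$ takes only values in the discrete set $\{2^{-k}\colon k\in\mathbb{N}\}\cup\{0\}$. First I would record the elementary dictionary between metric balls and tree restrictions: for $x,y\in[T]$ and $m\in\mathbb{N}$ one has $x\!\upharpoonright\! m = y\!\upharpoonright\! m$ if and only if $d(x,y)\le 2^{-m}$, and more finely $d(x,y)<2^{-n}$ if and only if $x\!\upharpoonright\!(n+1) = y\!\upharpoonright\!(n+1)$; the same equivalences hold in $[S]$ applied to the points $\Phi(x),\Phi(y)$. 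These are immediate from the definition of $d(x,y)$ as $2^{-k}$ with $k$ the least coordinate of disagreement.

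Because the nonzero values of $d$ are exactly the powers $2^{-k}$, the quantifiers ``$\forall \varepsilon>0$'' and ``$\exists\delta>0$'' in the definition of uniform continuity may be replaced, without loss of generality, by quantifiers ranging over $\varepsilon=2^{-n}$ and $\delta=2^{-m}$ with $n,m\in\mathbb{N}$. This is the only place where the discreteness of the range of $d$ is invoked, and it is what allows the $\varepsilon$--$\delta$ formulation to collapse to the purely combinatorial statement in the lemma.

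For the forward implication, given $n$ I would apply uniform continuity with $\varepsilon=2^{-n}$ to obtain a suitable $\delta$, pick $m$ with $2^{-m}<\delta$, and observe that the hypothesis $x\!\upharpoonright\! m = y\!\upharpoonright\! m$ forces $d(x,y)\le 2^{-m}<\delta$, hence $d(\Phi(x),\Phi(y))<2^{-n}$, which by the dictionary yields $\Phi(x)\!\upharpoonright\! n=\Phi(y)\!\upharpoonright\! n$. For the converse, given $\varepsilon>0$ I would choose $n$ with $2^{-n}<\varepsilon$, feed this $n$ into the combinatorial statement to obtain $m$, and set $\delta=2^{-m}$; then $d(x,y)<\delta$ gives $x\!\upharpoonright\! m=y\!\upharpoonright\! m$, so $\Phi(x)\!\upharpoonright\! n=\Phi(y)\!\upharpoonright\! n$, i.e. $d(\Phi(x),\Phi(y))\le 2^{-n}<\varepsilon$.

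There is no genuine obstacle here; the statement is a direct translation, as the preamble to the lemma indicates. The only point demanding care is the bookkeeping with strict versus non-strict inequalities and the attendant off-by-one shifts between ``$d<2^{-n}$'' and ``agreement up to length $n+1$'', which is exactly why I would state the dictionary of the first step in both its $\le$ and its $<$ forms.
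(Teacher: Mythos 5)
Your proof is correct, and it is exactly the direct unwinding that the paper has in mind: the paper states this lemma without proof, calling it ``a direct consequence'' of the definitions of the path-space metric and of uniform continuity. Your dictionary between restrictions and the discrete distance values $2^{-k}$, together with the careful handling of strict versus non-strict inequalities, is precisely the bookkeeping that justifies that claim.
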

 
 In this context we define $\Omega_\Phi(n)$ to be the least $m$ such that the condition in the above lemma holds. 
 Note that the lemma implies
 $$ \big|\big\{ t\in T\colon |t|=\Omega_\Phi(n)\big\}\big|\geq \big|\big\{s\in S\colon |s|=n\big\}\big|. $$

 \begin{lemma}\label{lem:unbdd} Let $T, S$ be pruned subtrees of $\mathbb{N}^{<\mathbb{N}}$ and let $\Phi\colon [T]\to [S]$ be  a uniform homeomorphism (see \ref{def:UH}).   If both  $[T], [S]$ are uncountable, then $\Omega_\Phi$ and $\Omega_{\Phi^{-1}}$ are unbounded monotone functions from $\mathbb{N}$ to $\mathbb{N}$.
 \end{lemma}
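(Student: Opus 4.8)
The plan is to treat well-definedness, monotonicity, and unboundedness in turn, with essentially all of the (modest) content in the unboundedness step. First I would record that $\Omega_\Phi$ and $\Omega_{\Phi^{-1}}$ are genuine functions $\mathbb N\to\mathbb N$ precisely because $\Phi$ and $\Phi^{-1}$ are uniformly continuous: by Lemma~\ref{lem:basic}, for each $n$ there is some $m$ satisfying the defining condition, so the least such $m$, namely $\Omega_\Phi(n)$, is finite.

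For monotonicity the observation is that the defining condition is monotone in $n$ by restriction of the conclusion. If $m$ witnesses the condition for a given $n'$, that is $x\upharpoonright m=y\upharpoonright m\Rightarrow\Phi(x)\upharpoonright n'=\Phi(y)\upharpoonright n'$, and $n\leq n'$, then restricting the conclusion from level $n'$ to level $n$ shows that the same $m$ witnesses the condition for $n$. In particular $\Omega_\Phi(n')$ witnesses the condition for $n$, so by minimality $\Omega_\Phi(n)\leq\Omega_\Phi(n')$. This gives monotonicity of $\Omega_\Phi$, and symmetrically of $\Omega_{\Phi^{-1}}$.

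For unboundedness I would argue by contradiction, exploiting the bijectivity of $\Phi$. Suppose $\Omega_\Phi$ is bounded, say $\Omega_\Phi(n)\leq M$ for all $n$. Note that once the defining condition holds for some $m$ it holds for every larger value, since $x\upharpoonright m'=y\upharpoonright m'$ is a stronger hypothesis when $m'\geq m$; hence the condition holds with the single value $m=M$ for every $n$ simultaneously. Thus whenever $x\upharpoonright M=y\upharpoonright M$ we get $\Phi(x)\upharpoonright n=\Phi(y)\upharpoonright n$ for all $n$, which forces $\Phi(x)=\Phi(y)$, and then injectivity of $\Phi$ gives $x=y$. Consequently the truncation map $x\mapsto x\upharpoonright M$ is injective on $[T]$, so $[T]$ injects into the countable set $\{t\in T\colon |t|=M\}$ and is therefore countable, contradicting the hypothesis that $[T]$ is uncountable. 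Applying the identical argument to the uniform homeomorphism $\Phi^{-1}\colon[S]\to[T]$, using that $[S]$ is uncountable, shows $\Omega_{\Phi^{-1}}$ is unbounded as well.

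The argument is short, so there is no real obstacle; the one thing to get right is the opposite logical directions of the two monotonicities — raising $n$ can only raise $\Omega_\Phi(n)$ (via restricting the conclusion), whereas raising $m$ can only help the condition (via strengthening the hypothesis). I would also remark that the counting inequality recorded after Lemma~\ref{lem:basic} gives an alternative route to unboundedness when the level sets of $T$ are finite, but it does not handle infinite level sets, so the injectivity argument above is the cleaner and fully general one. With it the role of the two uncountability hypotheses becomes transparent: $[T]$ uncountable yields unboundedness of $\Omega_\Phi$, and $[S]$ uncountable yields that of $\Omega_{\Phi^{-1}}$.
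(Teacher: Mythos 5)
Your proof is correct and takes essentially the same approach as the paper: both argue by contradiction that a bounded $\Omega_\Phi$ would force a path space to be countable, since $\Phi$ would then be determined by restrictions to a fixed finite level. The only (immaterial) difference is that you use injectivity of $\Phi$ to conclude $[T]$ is countable, while the paper uses surjectivity to conclude $[S]$ is countable; as $\Phi$ is a bijection, these are interchangeable.
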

 
 \begin{proof} The monotonicity is obvious. If $\Omega_\Phi$ is bounded by $m$, then for each $n $ the string $\Phi(x)\!\!\upharpoonright\!n$ is determined by $x \!\!\upharpoonright\! m$. So   $[S]$ is countable. 
 \end{proof}

\subsection{Definition of the reduction}

We will need  to restrict the entries of the sequences in $\mathbb R^\mathbb{N}$ to natural numbers. This does not decrease the complexity of the equivalence relation:
\begin{lemma}\label{lemma_th:1} $\ell_\infty\leq_B \ell_\infty\!\!\upharpoonright\!\mathbb{N}^\mathbb{N}$.
\end{lemma}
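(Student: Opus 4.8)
The plan is to exhibit an explicit Borel map $f\colon \mathbb{R}^{\mathbb{N}}\to\mathbb{N}^{\mathbb{N}}$ witnessing the reduction, built so that the additive bounded-difference relation is preserved up to a fixed additive error in each coordinate. The only two features of $\ell_\infty$ that matter are that it compares sequences \emph{coordinatewise} and that the bound $M$ must be \emph{uniform} in $n$; so it suffices to turn reals into naturals by a procedure that perturbs coordinate distances by at most a constant independent of $n$.

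First I would reduce to nonnegative reals by splitting into positive and negative parts. For $x\in\mathbb{R}$ set $x^{+}=\max(x,0)$ and $x^{-}=\max(-x,0)$, so that $x=x^{+}-x^{-}$ with $x^{+},x^{-}\ge 0$. Since $t\mapsto t^{+}$ is $1$-Lipschitz, one has $|x^{+}-y^{+}|\le|x-y|$ and $|x^{-}-y^{-}|\le|x-y|$, while the triangle inequality gives $|x-y|\le|x^{+}-y^{+}|+|x^{-}-y^{-}|$. Hence, for sequences $(x_n),(y_n)$,
\[
\sup_{n}|x_n-y_n|<\infty
\iff
\sup_{n}|x_n^{+}-y_n^{+}|<\infty \ \text{ and }\ \sup_{n}|x_n^{-}-y_n^{-}|<\infty ,
\]
and the implied bounds differ only by a factor of $2$, so they remain uniform in $n$.

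Next I would discretize by flooring, which now lands in $\mathbb{N}$ because the parts are nonnegative. Writing $\lfloor a\rfloor=a-\{a\}$ with $\{a\}\in[0,1)$ shows that for all $a,b$ one has $\bigl||\lfloor a\rfloor-\lfloor b\rfloor|-|a-b|\bigr|\le 1$, so flooring changes each coordinate distance by at most $1$ and preserves uniform boundedness:
\[
\sup_n|a_n-b_n|<\infty \iff \sup_n|\lfloor a_n\rfloor-\lfloor b_n\rfloor|<\infty .
\]
I then define $f$ by interleaving the two resulting natural-number sequences, say $f\bigl((x_n)\bigr)(2n)=\lfloor x_n^{+}\rfloor$ and $f\bigl((x_n)\bigr)(2n+1)=\lfloor x_n^{-}\rfloor$. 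Chaining the two displayed equivalences yields $(x_n)\,\ell_\infty\,(y_n)$ iff $f((x_n))\,\ell_\infty\,f((y_n))$. The map $f$ is Borel since each coordinate is a composition of the continuous maps $x\mapsto x^{\pm}$ with the Borel floor function, and a map into a product space is Borel exactly when all its coordinates are.

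The calculation is routine; the one point needing care, and the only genuine obstacle, is the uniformity of the bound. Each step above perturbs coordinate distances only by a \emph{bounded additive} amount with constants independent of $n$, which is precisely what is required to preserve $\ell_\infty$ (a single $M$ for all $n$). Those same perturbations would be fatal for finer relations such as pointwise or eventual equality, which is exactly why the argument is tailored to the bounded-difference relation.
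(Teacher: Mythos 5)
Your proposal is correct and takes essentially the same route as the paper: split each real into its nonnegative and negative parts, interleave these into even and odd coordinates, and floor to land in $\mathbb{N}^{\mathbb{N}}$, checking that every step perturbs coordinatewise distances by a uniformly bounded amount. If anything, your bookkeeping is slightly more careful than the paper's, whose intermediate claim $|f(x)(n)-f(y)(n)|\leq |x(n)-y(n)|$ fails at integer crossings (e.g.\ $x(n)=2.0$, $y(n)=1.9$), whereas your additive-error bound $\bigl|\,|\lfloor a\rfloor-\lfloor b\rfloor|-|a-b|\,\bigr|\le 1$ is exactly what is needed and suffices for the same conclusion.
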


\begin{proof} Define $f\colon \mathbb{R}^\mathbb{N}\to \mathbb{N}^\mathbb{N}$ by
	$$ f(x)(2n)=\left\{\begin{array}{ll}\lfloor x(n)\rfloor, & \mbox{if $x(n)\geq 0$,} \\ 0, & \mbox{otherwise,} \end{array}\right. $$
	and
	$$ f(x)(2n+1)=\left\{\begin{array}{ll}0, & \mbox{if $x(n)\geq 0$,} \\ -\lfloor x(n)\rfloor, & \mbox{otherwise.} \end{array}\right. $$
	Then for any $x, y\in\mathbb{R}^\mathbb{N}$ and $n\in\mathbb{N}$, we have $|f(x)(n)-f(y)(n)|\leq |x(n)-y(n)|$. Thus, if $x\ell_\infty y$, then $f(x)\ell_\infty f(y)$. Conversely, if $|f(x)(n)-f(y)(n)|< M$ for all $n\in\mathbb{N}$, then for all $n\in\mathbb{N}$, either $x(n)y(n)>0$ and $|x(n)-y(n)|<M$ hold, or else $x(n)y(n)\leq 0$ and $|x(n)|, |y(n)|<M$ hold; in both cases we have $|x(n)-y(n)|<2M$. Thus $f$ is a reduction from $\ell_\infty$ to $\ell_\infty\!\!\upharpoonright\!\mathbb{N}^\mathbb{N}$.
\end{proof}

For the next step of the reduction, we consider an arbitrary $x\in \mathbb{N}^\mathbb{N}$ and define a pruned tree $\mathsf{T}_x$ on $\mathbb{N}$. Before defining $\mathsf{T}_x$, we first introduce  some auxiliary trees 
$$ \mathsf{S}_*, \mathsf{S}_1, \mathsf{S}_2, \dots, \mathsf{S}_k, \dots, \mathsf{S}_\omega, \mathsf{R}_0, \mathsf{R}_1, \dots, \mathsf{R}_k, \dots $$
that will be used as building blocks.
Let $\mathsf{S}_*\subseteq \mathbb{N}^{<\mathbb{N}}$ be the tree consisting of all finite sequences $s\in \mathbb{N}^{<\mathbb{N}}$ such that $s(n)$ is even  for each $n\in\mathbb{N}$. Then $\mathsf{S}_*$ is a copy of the full tree $\mathbb{N}^{<\mathbb{N}}$. The point of defining $\mathsf{S}_*$ is that we may arbitrarily extend $\mathsf{S}_*$ by adding extensions of existing nodes, and still maintain that the extended tree be an ``official"  subtree of $\mathbb{N}^{<\mathbb{N}}$. 

The trees $\mathsf{S}_1, \mathsf{S}_2, \dots, \mathsf{S}_k,\dots$ will be abstractly and inductively defined. Their exact realizations as subtrees of $\mathbb{N}^{<\mathbb{N}}$ will not be important. To begin, let $\mathsf{S}_1$ be a single branch. Thus the Cantor--Bendixson rank of $[\mathsf{S}_1]$ is $1$. Let $\mathsf{S}_2$ be a pruned tree so that the Cantor--Bendixson derivative of $[\mathsf{S}_2]$ is $[\mathsf{S}_1]$. So the Cantor--Bendixson rank of $[\mathsf{S}_2]$ is $2$. In general, let $\mathsf{S}_{k+1}$ be a pruned tree whose Cantor--Bendixson derivative is $[\mathsf{S}_k]$. Then the Cantor--Bendixson rank of $\mathsf{S}_{k+1}$ is $k+1$. This finishes the definitions of $\mathsf{S}_k$ for $1\leq k<\mathbb{N}$.

Next we define $\mathsf{S}_\omega$ to be the extension of $\mathsf{S}_*$ defined as follows. For each $k\in\mathbb{N}$, let $\mathsf{z}_k$ be the branch of $\mathsf{S}_*$ defined as
%
$$ \mathsf{z}_k(m)=\left\{\begin{array}{ll}2k, & \mbox{if $m=0$,} \\0, & \mbox{otherwise.} \end{array}\right. $$
For any $k, m\in\mathbb{N}$, let $\mathsf{t}_{k,m}=\mathsf{z}_k\!\!\upharpoonright\!(m+1)$. $\mathsf{S}_\omega$ is obtained by extending $\mathsf{S}_*$ with appending a copy of $\mathsf{S}_k$ at the node $\mathsf{t}_{k,2m}$ for all $k, m\in\mathbb{N}$. 

The distinctive feature of $\mathsf{S}_\omega$ is that any autohomeomorphism of $[\mathsf{S}_\omega]$ must keep~$\mathsf{z}_k$ fixed for all $k\in\mathbb{N}$. This is because of the topological characterization of $\mathsf{z}_k$ as the only   $z \in  [\mathsf{S}_\omega]$ with the   property that:
%
\begin{quote} for any clopen set $V$ with $z\in V$, there exists a clopen set $U\subseteq V$ with $z\in U$ such that the $k$-th Cantor--Bendixson derivative of $U$ is the perfect kernel of $U$, and for any $i<k$, the $i$-th Cantor--Bendixson derivative of $U$ contains a perfect subset but is not itself perfect.
\end{quote}

Next we abstractly define a sequence of finitely splitting pruned trees 
$$\mathsf{R}_0, \mathsf{R}_1,\dots, \mathsf{R}_k,\dots$$
so that no   $[\mathsf{R}_k]$ has an isolated point,  as follows.
 For each $k\in\mathbb{N}$, let $\mathsf{R}_k$ be $2^k$-splitting on the first level, and then each node on the first level be extended with a copy of the full binary tree. It is easy to see that for any $k\in\mathbb{N}$, $[\mathsf{R}_k]$ is homeomorphic to a Cantor set, and therefore there is a uniform homeomorphism between $[\mathsf{R}_k]$ and $[\mathsf{R}_\ell]$ for any $k, \ell\in\mathbb{N}$. 

Finally we are ready to define $\mathsf{T}_x$ given an $x\in\mathbb{N}^\mathbb{N}$. $\mathsf{T}_x$ is obtained from $\mathsf{S}_\omega$ by extending the node $\mathsf{t}_{k, 2m+1}$ by a copy of $\mathsf{R}_{x(k)}$ for all $k, m\in\mathbb{N}$. This finishes the definition of $\mathsf{T}_x$. Figure~\ref{fig:Tx} illustrates the definition of $\mathsf{T}_x$.

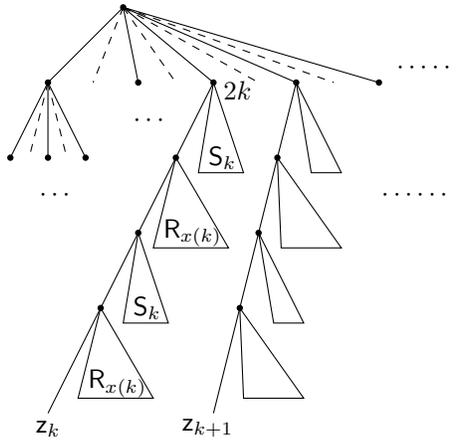
\begin{figure}[h]
\centering

  \begin{tikzpicture}[every node/.style={font=\small}]
  	
  	\coordinate (R) at (0,0);
  	\fill (R) circle (1.2pt);
  	
  	\draw (R) -- (-1,-1);
	\draw (R) -- (0.2,-1);
  	\draw (R) -- (1.2,-1);
  	\draw (R) -- (2.3,-1);
	\draw (R) -- (3.4, -1);
	\node[right] at (3.5,-0.8) {$\cdots\cdots$};
  	
  	\draw[dashed] (R) -- (-0.4,-1);
  	\draw[dashed] (R) -- (0.7, -1);
  	\draw[dashed] (R) -- (1.8,-1);
	\draw[dashed] (R) -- (2.8, -1);

	\coordinate (A) at (-1,-1);
	\coordinate (B) at (1.2, -1);
	\coordinate (C) at (2.3,-1);
	\coordinate (D) at (3.4,-1);
	\coordinate (E) at (0.2,-1);
	\fill (A) circle (1.2pt);
	\fill (B) circle (1.2pt);
	\fill (C) circle (1.2pt);
	\fill (D) circle (1.2pt);
	\fill (E) circle (1.2pt);

	\coordinate (F) at (-1.5,-2);
	\coordinate (G) at (-1, -2);
	\coordinate (H) at (-0.5, -2);
	\fill (F) circle (1.2pt);
	\fill (G) circle (1.2pt);
	\fill (H) circle (1.2pt);

	\draw (A) -- (F);
	\draw (A) -- (G);
	\draw (A) -- (H);
	\draw[dashed] (A) -- (-1.25,-2);
	\draw[dashed] (A) -- (-0.75, -2);
	\node[right] at (-1.25,-2.5) {$\cdots$};
	\node[right] at (0, -1.5) {$\cdots$};
  	
  	\draw (B) -- (-1,-5.4);
	\node[right] at (1.2, -1.1) {\small $2k$};
	\node[left]  at (-0.7,-5.6) {$\mathsf{z}_k$};
	\node[left]  at (1.6,-5.6) {$\mathsf{z}_{k+1}$};

  	\draw (B) -- (1,-2.2) -- (1.6, -2.2) --(B);
  	\node[right] at (1,-2.0) {$\mathsf{S}_k$};

	\draw (0.2,-3) -- (0,-4.2) -- (0.6, -4.2) -- (0.2,-3);
	\node[right] at (0,-4) {$\mathsf{S}_k$};
  	
 	\fill (0.7,-2) circle (1.2pt); 	
	\fill (0.2, -3) circle (1.2pt);
	\fill (-0.3, -4) circle (1.2pt);

	\draw (0.7,-2) -- (0.4,-3.2) -- (1.4,-3.2) -- (0.7,-2);
	\node[right] at (0.4, -3) {$\mathsf{R}_{x(k)}$};

  	\draw (-0.3,-4) -- (-0.6,-5.2) -- (0.4,-5.2) -- (-0.3,-4);
	\node[right] at (-0.6, -5) {$\mathsf{R}_{x(k)}$};
  	
  	\draw (C) -- (1.2, -5.4);
  	
  	\draw (C) -- (2.5,-2.2) -- (2.9, -2.2) --(C);
  	\draw (1.8, -3) -- (2,-4.2) -- (2.4, -4.2) --(1.8,-3);
	\fill (1.8,-3) circle (1.2pt);

 	\draw (2.05,-2) -- (2.1,-3.2) -- (2.9, -3.2) --(2.05,-2);
  	\draw (1.55, -4) -- (1.6,-5.2) -- (2.4, -5.2) --(1.55,-4);
	\fill (2.05,-2) circle (1.2pt);	
	\fill (1.55,-4) circle (1.2pt);

	\node[right] at (3.3,-2.5) {$\cdots\cdots$};

  \end{tikzpicture}
\caption{\label{fig:Tx} The construction of $\mathsf{T}_x$.}
\end{figure}

For each $x\in \mathbb{N}^\mathbb{N}$, we let $\mathsf{S}_*(\mathsf{T}_x)$, $\mathsf{S}_\omega(\mathsf{T}_x)$, $\mathsf{t}_{k,m}(\mathsf{T}_x)$, $\mathsf{z}_k(\mathsf{T}_x)$ etc. denote respectively the subtrees $\mathsf{S}_*$, $\mathsf{S}_\omega$, nodes $\mathsf{t}_{k,m}$, and branches $\mathsf{z}_k$ etc.  in the construction of $\mathsf{T}_x$. Note that for different $x$ these objects are isomorphic, but we need to address the specific copies of them as substructures of $\mathsf{T}_x$.

 For $k,m\in\mathbb{N}$, we let $\mathsf{R}_{x(k), m}$ denote the copy of $\mathsf{R}_{x(k)}$ which was appended at the node $\mathsf{t}_{k,2m+1}(\mathsf{T}_x)$. 

Note that for any $x\in \mathbb{N}^\mathbb{N}$ and $k\in\mathbb{N}$, $\mathsf{z}_k(\mathsf{T}_x)$ is still topologically characterized by the same property as above, since the additional sets added beyond $[\mathsf{S}_\omega]$ in the construction of $[\mathsf{T}_x]$ are all perfect.

 Therefore, if $\theta$ is any homeomorphism from $[\mathsf{T}_x]$ to $[\mathsf{T}_y]$, then we must have that $\theta(\mathsf{z}_k(\mathsf{T}_x))=\mathsf{z}_k(\mathsf{T}_y)$ for all $k\in\mathbb{N}$. 

Although we do not need this in the proof, we observe that for any $x, y\in \mathbb{N}^\mathbb{N}$, $[\mathsf{T}_x]$ and $[\mathsf{T}_y]$ are homeomorphic.

 \begin{definition} Let $(X, d_X)$ and $(Y, d_Y)$ be Polish metric spaces and let $f\colon X\to Y$. Let $L>0$. We say that $f$ is {\em $L$-Lipschitz} if for any $x, x'\in X$, 
	$$d_Y(f(x), f(x'))\leq L\, d_X(x,x'). $$
	$f$ is {\em Lipschitz} if it is $L$-Lipschitz for some $L>0$.
\end{definition}

For $k, \ell\in\mathbb{N}$, consider the natural homeomorphism $\Phi_{k,\ell}$ between $[\mathsf{R}_k]$ and $[\mathsf{R}_{\ell}]$. For later   use we note that $\Phi_{k,\ell}$ is $L$-Lipschitz with $L=2^{\ell-k}$. Similarly, $\Phi_{k,\ell}^{-1}=\Phi_{\ell,k}$ is $L^{-1}$-Lipschitz. 

%

\subsection{Why the reduction works}

Assume $x, y\in \mathbb{N}^\mathbb{N}$ with $x\ell_\infty y$. Let $M>0$ be such that $|x(n)-y(n)|<M$ for all $n\in\mathbb{N}$. We define a uniform homeomorphism $\Phi$ between $[\mathsf{T}_x]$ and $[\mathsf{T}_y]$. For this, note that $[\mathsf{T}_x]$ is the disjoint union of $[\mathsf{S}_{\omega}(\mathsf{T}_x)]$ and $[\mathsf{R}_{x(k),m}]$ for all $m\in\mathbb{N}$, and $[\mathsf{T}_y]$ is the disjoint union of $[\mathsf{S}_{\omega}(\mathsf{T}_y)]$ and $[\mathsf{R}_{y(k),m}]$ for all $m\in\mathbb{N}$. Let $\Phi$ be the homeomorphism from $[\mathsf{S}_{\omega}(\mathsf{T}_x)]$ onto $[\mathsf{S}_{\omega}(\mathsf{T}_y)]$ induced by the identity map from $\mathsf{S}_{\omega}(\mathsf{T}_x)$ to $\mathsf{S}_{\omega}(\mathsf{T}_y)$. Then it is obvious that $\Phi$ is an isometry, and therefore both $\Phi$ and $\Phi^{-1}$ are $1$-Lipschitz. Let $\psi_{k, m}$ be the natural homeomorphism between $[\mathsf{R}_{x(k),m}]$ and $[\mathsf{R}_{y(k),m}]$. Then for all $k,m\in\mathbb{N}$, both $\psi_{k,m}$ and $\psi_{k,m}^{-1}$ are $2^M$-Lipschitz. Let $\Psi=\Phi\cup\bigcup_{k,m}\psi_{k,m}$. Then both $\Psi$ and $\Psi^{-1}$ are $2^M$-Lipschitz. Thus $\Psi$ is a uniform homeomorphism between $[\mathsf{T}_x]$ and $[\mathsf{T}_y]$.

Conversely, assume $\Psi$ is a uniform homeomorphism between $[\mathsf{T}_x]$ and $[\mathsf{T}_y]$. In particular, $\Psi$ is a homeomorphism. Then  $\Psi(\mathsf{z}_k(\mathsf{T}_x))=\mathsf{z}_k(\mathsf{T}_y)$ for any $k\in\mathbb{N}$. By Lemma~\ref{lem:unbdd}, there is $m>1$ such that 
$$ \Omega_{\Psi}(2m+2)>1 $$
and, letting $n=\Omega_{\Psi}(2m+2)$, also
$$ \Omega_{\Psi^{-1}}(2n+2)>1. $$
Let $M$ be the least such that $2M+2\geq\Omega_{\Psi^{-1}}(2n+2)$. 
We claim that for any $k\in\mathbb{N}$, $\Psi([\mathsf{R}_{x(k),n}])$ is contained in 
$$\bigcup_{m<p\leq M}[\mathsf{R}_{y(k),p}]. $$
To see this, let $u\in [\mathsf{R}_{x(k),n}]$ be arbitrary. Then 
$$u\upharpoonright (2n+1)=\mathsf{z}_k(\mathsf{T}_x)\upharpoonright (2n+1). $$
Since $2n+1>n=\Omega_{\Psi}(2m+2)$, we have that 
$$ \Psi(u)\upharpoonright (2m+2)=\mathsf{z}_k(\mathsf{T}_y)\upharpoonright (2m+2). $$
However, if $\Psi(u)=v\in [\mathsf{R}_{y(k),p}]$ for $p>M$, then 
$$ v\upharpoonright (2p+1)=\mathsf{z}_k(\mathsf{T}_y)\upharpoonright (2p+1), $$
and since $2p+1>2M+2\geq \Omega_{\Psi^{-1}}(2n+2)$, we have
$$ u\upharpoonright (2n+2)=\mathsf{z}_k(\mathsf{T}_x)\upharpoonright (2n+2), $$
which is a contradiction.

By our construction, the number of nodes at level $2n+2$ in the copy of $\mathsf{R}_{x(k),n}$ is $2^{x(k)}$. The number of nodes at level $2M+2$ in the copies of $\bigcup_{m<p\leq M} \mathsf{R}_{y(k),p}$ is no more than
$$ 2^{y(k)+(M-m)}+2^{y(k)+(M-m-1)}+\cdots+2^{y(k)} = 2^{y(k)}(2^{M-m+1}-1)\leq 2^{y(k)+M-m+1}.
$$
By Lemma~\ref{lem:basic} we have that $2^{y(k)+M-m+1}\geq 2^{x(k)}$, and hence $x(k)-y(k)\leq M-m+1$. This means that $x(k)-y(k)$ is bounded. By symmetry, $y(k)-x(k)$ is also bounded. Therefore $x\ell_\infty y$.

\subsection{Further remarks}\label{Sec_fr}


\begin{definition} We call an analytic equivalence relation {\em universal} if any analytic equivalence relation is Borel reducible to it. 
\end{definition}

Such equivalence relations were called complete analytic equivalence relations in~\cite{FLR}. Here we call them universal to avoid possible confusion. In \cite{FLR} and subsequent research, many natural equivalence relations were found to be universal analytic equivalence relations. In particular, it was shown in \cite{FLR} that the uniform homeomorphism for all Polish metric spaces is a universal analytic equivalence relation. 

  We remark that our result does not show that $\cong_u$ is universal. By \ref{prop:quicko} GI is reducible to $\cong_u$, and by the main result~\ref{th:main} $\ell_\infty$ is reducible to $\cong_u$. However,   the equivalence relation $c_0$ does not reduce to $\mbox{\rm GI} \times \ell_\infty$. Recall that $c_0$ is the equivalence relation on $\mathbb{R}^{\mathbb{N}}$ defined as
$$ (x_n)c_0(y_n)\iff \lim_n |x_n-y_n|=0. $$
Toward a contradiction, assume that there is a map $f$ witnessing that $c_0$ is Borel reducible to $\mbox{\rm GI}\times \ell_\infty$. The map $f$ induces a homomorphism of equivalence relations from $c_0$ to $\mbox{\rm GI}$. By the Hjorth turbulence theorem (see \cite[Theorem 3.18 and Example 3.23]{Hj}; also see \cite[Theorems 10.4.2 and 10.5.2]{GaoBook}), there is a $c_0$-invariant comeager set $C$ on which $c_0$ is sent by $f$ to a single $\mbox{\rm GI}$-class. This implies that $c_0$ on $C$ is Borel reducible to $\ell_\infty$. However, $c_0\upharpoonright C^2$ is ${\bf\Pi}^0_3$-complete (see \cite[Exercise 8.5.1]{GaoBook}) and as such, it is not Borel reducible to any $F_\sigma$ equivalence relation (see \cite[Theorem 8.5.2]{GaoBook} and its proof). 

We conjecture that $\cong_u$, and hence uniform homeomorphism of  Polish ultrametric spaces, are universal analytic equivalence relations.

\section{Preliminaries on procountable groups}
In \ref{def:procountable} we defined procountable groups.  This section provides   material on procountable groups that will be needed for the second step in our overall proof of the Main Result \ref{th:main}.
 \subsection{Presentations of procountable groups} \label{s:Procountable}
 Encoding inverse systems by   structures with domain $\mathbb N$ that have   infinitely many sorts, we obtain a  standard Borel space  $\+ P$ of presentations of  procountable   groups. We   show     that these presentations can be converted in a Borel way into  procountable  closed subgroups of $\S$, and  that the latter form a Borel set~$\+ C$.

\begin{notation} \label{not:strings} {\rm  The space of all finite sequences of natural numbers is denoted as $\mathbb{N}^{<\mathbb{N}}$. For $s, t\in \mathbb{N}^{<\mathbb{N}}$ and $x\in\mathbb{N}^{\mathbb{N}}$, we use $s\subseteq t$ to denote that $s$ is an initial segment of $t$ and similarly for $s\subseteq x$. For $s\in\mathbb{N}^{<\mathbb{N}}$, we write \[[s]= \left\{ x \in \S \colon \, s\subseteq x\right\}.\] For  a closed subgroup $G \leq_c \S$, we  will write  $[s]_G$ for $[s] \cap G$.  We let $\sigma_n$  denote the tuple $(0, 1 , \ldots, n-1)$; note that the $[\sigma_n]_G$ form a neighbourhood basis of the identity in $G$ consisting of open subgroups.   
		
\smallskip \noindent 	
		Given $s, t\in\mathbb{N}^{<\mathbb{N}}$, let $s\circ t$ be the sequence representing  composition, on the longest initial segment of $\mathbb{N}$ in which it is defined. Let $s^{-1}$ be the inverse of $s$ on the longest initial segment in which it is defined.  For instance, $(7,4,3,1,0 ) \circ (3,4 ,6 ) = (1,0)$, and $(1,2,0,5)^{-1} = (2,0,1)$.} \end{notation}

\smallskip

\noindent \textit{From presentations in $\+ P$ to procountable closed subgroups of $\S$.}
As defined in  \ref{def:procountable}, 
an inverse limit $G=  \varprojlim_n (G_n,p_n) $ can be concretely thought of as a closed subgroup of $\prod_n G_n$ where each $G_n$ is discrete, i.e., \[G = \left\{ x \in \prod_n G_n \colon \, \forall n \, [p_n(x(n+1))= x(n)]\right\}.\] 

\begin{remark} \label{rem:diagram to closed subgroup}
	From an inverse system as above,  one  can in a Borel way determine a corresponding closed subgroup of $\S$. One  replaces $G_n$ by an isomorphic group with domain $D_n=\{ \langle  i,n \rangle \colon i \in \mathbb N \}$,  where $\langle\cdot,\cdot\rangle\colon \mathbb{N}\times\mathbb{N}\to \mathbb{N}$ is the Cantor pairing function.  Then   the left regular presentation  of $G_n$ is a copy of $G_n$ as a permutation group with support  $D_n$, and the    product of these copies in $\S$  is a copy $C$ of $\prod_n G_n$ as a closed subgroup of $\S$. Finally, one obtains a copy of $G$ as a closed subgroup of  $C$ in a Borel way from the binding maps $p_n$. 
\end{remark}

%
%
%
%
%
%
%

 \begin{proposition}  \label{prop: Borel pc} The  procountable closed subgroups of $\S$   form a  Borel class~$\+ C$. \end{proposition}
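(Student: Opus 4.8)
The plan is to leverage the known characterisation, already invoked in the excerpt, that a non-archimedean Polish group is procountable exactly when it admits a neighbourhood basis of the identity consisting of \emph{normal} open subgroups (see \cite[Theorem 3.3]{GX} or \cite[Lemma~2]{Malicki2016}). First I would record the elementary reformulation that, since the $[\sigma_n]_G$ already form a neighbourhood basis of open subgroups of $G \leq_c \S$, the group $G$ is procountable if and only if for every $n$ there is an open normal subgroup of $G$ contained in $[\sigma_n]_G$. The point is that the \emph{normal core} $K_n := \bigcap_{g\in G} g[\sigma_n]_G g^{-1}$ is the largest normal subgroup of $G$ contained in $[\sigma_n]_G$; hence an open normal subgroup sits inside $[\sigma_n]_G$ precisely when $K_n$ itself is open, in which case the family $(K_n)_n$ is a neighbourhood basis of the required form.

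Second, I would identify $K_n$ concretely. Since $g[\sigma_n]_G g^{-1}$ is the pointwise stabiliser in $G$ of $\{g(0),\dots,g(n-1)\}$, the core $K_n$ is the pointwise stabiliser in $G$ of the set $O_n := \{\, g(i) : g\in G,\ i<n\,\}$, i.e. the union of the $G$-orbits of $0,\dots,n-1$. Here I would stress the subtle point: in contrast with the ambient group $\S$, openness of $K_n$ is \emph{not} equivalent to finiteness of $O_n$ (the regular representation of a discrete copy of $\mathbb Z$ has infinite orbits yet trivial, hence open, cores). The correct condition is therefore the literal one: $K_n$ is open iff it contains some $[\sigma_m]_G$, that is, iff there is $m$ with $[\sigma_m]_G \subseteq g[\sigma_n]_G g^{-1}$ for every $g\in G$.

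Third, I would translate this into a Borel predicate on the code. Representing a closed subgroup by $T_G = \{\, s : [s]_G \neq \emptyset \,\}$ as in \cite{KNT}, each predicate ``$[s]_G \neq \emptyset$'' is Borel (clopen in the Effros structure). Because $g[\sigma_n]_G g^{-1}$ depends only on $g\!\uhr\! n$, the quantifier $\forall g\in G$ collapses to a number quantifier over the countably many $s\in T_G$ with $|s|=n$. For such an $s$, the clause $[\sigma_m]_G \subseteq g[\sigma_n]_G g^{-1}$ says that every $h\in G$ with $h\!\uhr\! m=\mathrm{id}$ fixes each point of $\mathrm{range}(s)$, which asserts that no $t\in T_G$ with $t\!\uhr\! m=\mathrm{id}$ moves a point of $\mathrm{range}(s)$; this is a $\Pi^0_1$ statement in $T_G$. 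Thus ``$K_n$ is open'' has the form $\exists m\,\forall s(\cdots)$ and procountability is $\forall n(\cdots)$, so membership in $\+ C$ is built from the Borel predicates of $T_G$ using only number quantifiers and is therefore Borel.

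The hard part will be Step two and three, namely pinning down the openness of the normal core correctly and Borel-ly. The tempting shortcut of declaring $G$ procountable iff all $G$-orbits are finite is genuinely false, so I cannot reduce to finiteness of $O_n$; instead I must retain the condition ``the stabiliser of $O_n$ contains some $[\sigma_m]_G$'' and verify carefully that the implicit quantifier over $g\in G$ can be replaced by a number quantifier over length-$n$ nodes of $T_G$, which is exactly what keeps the final formula within the Borel hierarchy.
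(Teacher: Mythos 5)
Your proof is correct, and it shares the paper's skeleton: both arguments start from the quoted characterisation (procountable $=$ neighbourhood basis of open \emph{normal} subgroups), reduce it to the statement that for each $n$ some open normal subgroup of $G$ lies inside $[\sigma_n]_G$, and then replace the quantifier over elements of $G$ by countable quantification over nodes of the tree $T_G=\{s : [s]_G\neq\emptyset\}$. Indeed your condition and the paper's are literally the same assertion read from opposite sides: $[\sigma_k]_G\subseteq K_n$ holds if and only if the normal closure of $[\sigma_k]_G$ is contained in $[\sigma_n]_G$, so ``the core of $[\sigma_n]_G$ is open'' is exactly ``some $[\sigma_k]_G$ has normal closure inside $[\sigma_n]_G$''. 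The genuine difference is in how the Borel translation is implemented. The paper expresses normality as $\forall x\in G\,\bigl[x^{-1}\circ[\sigma_k]_G\circ x\subseteq[\sigma_n]_G\bigr]$ and proves it equivalent to the string condition $\forall s\,\forall t\supseteq\sigma_k\ [s^{-1}\circ t\circ s]_G\cap[\sigma_n]_G\neq\emptyset$, where the implication from the string condition back to the group condition needs a continuity-of-conjugation argument. You instead exploit that the basic subgroups are point stabilisers: $g[\sigma_n]_G g^{-1}$ is the pointwise stabiliser of $\{g(0),\dots,g(n-1)\}$, hence $K_n$ is the pointwise stabiliser of the orbit set $O_n$, and ``$[\sigma_m]_G\subseteq K_n$'' becomes the fixed-point statement that no $t\in T_G$ which is the identity below $m$ moves a point of $\mathrm{range}(s)$, for any $s\in T_G$ of length $n$. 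There the only fact needed is that every node of $T_G$ extends to an element of $G$, so the continuity step dissolves into the observation that stabiliser membership is detected by finite initial segments; the price is that your argument is tied to the stabiliser basis $([\sigma_n]_G)_n$, whereas the paper's partial-composition formulation does not use that the basic open subgroups are stabilisers and so is closer to an argument that survives for an arbitrary countable basis of open subgroups. Your warning that openness of $K_n$ cannot be replaced by finiteness of $O_n$ (regular representation of $\mathbb{Z}$) is apt and steers the proof away from a genuinely false shortcut. One terminological quibble: the sets $\{G : [s]_G\neq\emptyset\}$ are generators of the Effros Borel structure, hence Borel, but calling them ``clopen'' is a slip---the Effros structure is a $\sigma$-algebra, not a topology, and Borelness is all your argument needs.
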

 
 \begin{proof}  
As mentioned before, $G$ is procountable if and only if the identity has a neighbourhood basis of normal open  subgroups. In the setting of closed subgroups of $\S$ this is equivalent to: for each $n$, there is $k>n$ such that the normal closure (in $G$) of  $[\sigma_k]_G$ is contained in $[\sigma_n]_G$. Since  $[\sigma_n]_G$ is a subgroup, it suffices to express as a Borel condition about $G$ that
 	\begin{equation} \label{eqn:Gcirc} \forall x \in G \, \left[x^{-1} \circ ( [\sigma_k]_G) \circ x\subseteq [\sigma_n]_G \right].  \end{equation} 
 	In the following, letters $s, t$ etc. range over sequences that have an extension in $G$.  Recalling \ref{not:strings}, we claim that  (\ref{eqn:Gcirc}) is equivalent to the Borel condition
 	\begin{equation} \label{eqn:Gcirc2} \forall  s\ \forall t \supseteq \sigma_k \ \  [s^{-1} \circ t  \circ s ]_G \cap [\sigma_n]_G \neq \emptyset.  \end{equation} 
 
\smallskip \noindent 	
 	For (\ref{eqn:Gcirc}) $\to$ (\ref{eqn:Gcirc2}), given $s$ and $t\supseteq \sigma_k$, let $x \supseteq s$, $x\in G$ and $y \supseteq t$, $y \in G$. Then $x^{-1} \circ y \circ x \in [\sigma_n]_G$. 
 	
\smallskip \noindent 	
	For (\ref{eqn:Gcirc2}) $\to$ (\ref{eqn:Gcirc}), suppose that there is $x \in G$ such that $x^{-1} \circ ( [\sigma_k]_G) \circ x$ is not contained in $[\sigma_n]_G$. Then, by the continuity of the conjugation operation, for sufficiently long $s\subseteq x$ and sufficiently long $t \supseteq \sigma_k$, we have $[s^{-1} \circ t  \circ s   ]_G \cap [\sigma_n]_G= \emptyset$. 
 \end{proof}
 
 \begin{remark}
 	As a converse to Remark~\ref{rem:diagram to closed subgroup}, from $G \in \+ C$,  one can in a Borel way obtain  an inverse system $(G_n, p_n)_{n \in \mathbb N} $ such that $G\cong \varprojlim_n (G_n, p_n)$.   This uses the methods in the proof above and standard tools from descriptive set theory. We omit the details as this fact will not be needed below. 
 \end{remark}

\subsection{Pro$_\omega$ categories}We provide some  preliminaries on   the correspondence of isomorphisms of procountable groups and certain isomorphisms of inverse systems of type $\omega$. Our purpose is mainly   to lay down  a notational framework; more general results are obtained in~\cite{Mardesic.Segal} using a somewhat different language.   

\begin{definition}\label{def_pro}  Let  $\mathcal{K}$ be a category that has  as objects countable structures in the same signature,    and as morphisms surjective homomorphisms  between them. 
	The~category $\mrm{Pro}_\omega( \mathcal{K})$ has as objects the  inverse systems $( \ol A , \ol p)   =   (A_n, p_{n})_{n \in \mathbb N}   $ over~$\+ K$, where $p_n \colon A_{n+1} \to A_n$. The pre-morphisms 
	\[\langle \ol f, \phi \rangle \colon( \ol A , \ol p)  \to (\ol B, \ol q )\] are given as sequences of       morphisms $f_n: A_{\phi(n) } \to B_n$ where $\phi: \mathbb{N} \rightarrow \mathbb{N}$ is increasing, and all the diagrams   commute. In more detail, for $k> n$, write $p_{k,n}$ for the ``interval composition"  maps $p_n \circ p_{n+1} \circ \ldots \circ p_{k-1} \colon A_k \to A_n$ and define $q_{k,n}$ in a similar way;  we require that the 
	diagrams
	\begin{center}
		\begin{tikzcd}
			A_{\phi(k)} \arrow[r, "f_k"] \arrow[d, "p_{\phi(k), \phi(n)}"'] & B_k \arrow[d, "q_{k,n}"] \\
			A_{\phi(n)} \arrow[r, "f_n"] & B_n
		\end{tikzcd}
	\end{center}   commute. Composition of pre-morphisms is defined in the canonical way (see \cite[pg. 5]{Mardesic.Segal}), and the identity pre-morphism is $({\mrm{id}_{\ol{A}}}, \mrm{id}_{\mathbb N})$, where $\mrm{id}_{\ol{A}} =
	 \{{\mrm{id}_{A_n}}\}_{n \in \mathbb{N}}$. 
	
	\smallskip \noindent	
	For each pair of objects   $( \ol A , \ol p)  $ and $  ( \ol B , \ol q)$ one defines an equivalence relation $\sim$ between pre-morphisms $(\bar{f}, \phi) ,(\bar{f}', \phi'): ( \ol A , \ol p)   \to ( \ol B , \ol q)$: let  $(\bar{f}, \phi) \sim (\bar{f}', \phi')$ if for  each $n \in \mathbb{N}$ there is $m \in \mathbb{N}$ such that $m \geq \phi(n), \phi'(n)$ and the following diagram commutes:
	\begin{equation*}
		\begin{tikzcd}
			&  A_m \arrow[ld, "p_{m,\phi(n)}"']\arrow[rd, "p_{m,\phi'(n)}"]   & \\
			A_{\phi(n)} \arrow[rd, "f_n"' below] & & A_{\phi'(n)} \arrow[ld, "f'_n"] \\
			& B_n &
		\end{tikzcd}
	\end{equation*}
	The transitivity of  the equivalence relation $\sim$ and its  compatibility 
	with composition of pre-morphisms  are proved in \cite[pg.\ 6]{Mardesic.Segal}. The morphisms of $\mrm{Pro}_\omega( \mathcal{K})$ are   defined as  the equivalence classes of pre-morphisms under $\sim$. 
	
\end{definition}

\begin{remark}\label{useful_remark} Notice that for a pre-morphism $(\bar{f}, \phi): ( \ol A , \ol p) \to ( \ol A , \ol p)$, we have $(\bar{f}, \phi) \sim ({\mrm{id}_{\ol{A}}}, \mrm{id}_{\mathbb N})$ if and only if for every $n \in \mathbb{N}$ there is $m \geq \phi(n), n$ such that the   diagram  above commutes where  $\phi'(n)= n$, $B_n= A_n$ and $f_n'$ is the identity of $A_n$.
 Since the morphisms of $\+ K$ are surjections, this simply means 	that  $f_n = p_{\phi(n), n}$ for each $n$.
\end{remark}

Let $\Gps$ be the category of countable groups with onto homomorphisms. 
\begin{lemma}\label{preserving_iso_progroups} \mbox{ }
	
	\begin{enumerate}[(i)]
		\item The inverse limit operation yields  a full functor $P$ from  $\mrm{Pro}_\omega(\Gps)$ to the category of procountable groups with continuous homomorphisms that have dense range.  
		\item $P$ preserves isomorphisms in both directions.  
	\end{enumerate}
\end{lemma}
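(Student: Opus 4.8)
The plan is to analyse $P$ through the open normal subgroups $K^A_n = \ker \pi^A_n$, where $\pi^A_n \colon G \to A_n$ is the projection of $G = \varprojlim_n(A_n,p_n)$, and similarly $\pi^B_n, K^B_n$ for $H = \varprojlim_n(B_n,q_n)$. The key preliminary observation is that since the bonding maps are surjective and the index set is $\omega$, each projection $\pi^A_n$ is onto (lift any $a \in A_n$ step by step along the $p_k$); hence $G/K^A_n \cong A_n$ and the $K^A_n$ form a neighbourhood basis of $1_G$ by open normal subgroups. I first define $P$ on a pre-morphism $\langle \bar f, \phi\rangle \colon (\bar A,\bar p) \to (\bar B,\bar q)$ by $P(\langle\bar f,\phi\rangle)(x)_n = f_n(x_{\phi(n)})$; the commuting diagrams of Definition~\ref{def_pro} show this lands in $H$, it is visibly a homomorphism, and it is continuous because its $n$-th coordinate factors through $\pi^A_{\phi(n)}$. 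That $P$ is well defined on $\sim$-classes, sends identities to identities, and respects composition are all immediate coordinatewise calculations using that $p_{m,\phi(n)}(x_m) = x_{\phi(n)}$ in the inverse limit. Density of the range follows from surjectivity of the $f_n$ together with surjectivity of the projections: any basic neighbourhood in $H$ fixes one coordinate $b_N \in B_N$, and choosing $a \in A_{\phi(N)}$ with $f_N(a)=b_N$ and $x \in G$ with $\pi^A_{\phi(N)}(x)=a$ gives a point of the range in that neighbourhood.

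For fullness, let $h \colon G \to H$ be a continuous homomorphism with dense range. For each $n$ the composite $\pi^B_n \circ h \colon G \to B_n$ is continuous into a discrete group, so its kernel is open and therefore contains some $K^A_{\phi(n)}$; choosing $\phi$ increasing, $\pi^B_n \circ h$ factors as $f_n \circ \pi^A_{\phi(n)}$ for a unique $f_n \colon A_{\phi(n)} \to B_n$. Surjectivity of $f_n$ is where dense range enters: the image of $f_n$ is $\pi^B_n(h(G))$, and since $h(G)$ is dense and $B_n$ is discrete this equals $\pi^B_n(H) = B_n$, so $f_n$ is a morphism of $\Gps$. The commuting squares $q_{k,n}\circ f_k = f_n \circ p_{\phi(k),\phi(n)}$ follow by precomposing with the surjection $\pi^A_{\phi(k)}$ and using $q_{k,n}\circ\pi^B_k = \pi^B_n$ and $p_{\phi(k),\phi(n)}\circ\pi^A_{\phi(k)} = \pi^A_{\phi(n)}$, then cancelling. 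Thus $\langle\bar f,\phi\rangle$ is a genuine pre-morphism, and $P(\langle\bar f,\phi\rangle) = h$ because the two maps agree in every coordinate by construction. This proves part (i).

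The same factoring argument shows that $P$ is faithful: if $P(\langle\bar f,\phi\rangle) = P(\langle\bar f',\phi'\rangle)$ then $f_n\circ\pi^A_{\phi(n)} = f'_n\circ\pi^A_{\phi'(n)}$ as continuous maps into the discrete $B_n$, so both factor through a common $A_m$ with $m\geq\phi(n),\phi'(n)$, and cancelling the surjection $\pi^A_m$ yields $f_n\circ p_{m,\phi(n)} = f'_n\circ p_{m,\phi'(n)}$, which is exactly the relation $\sim$ of Definition~\ref{def_pro} (cf.\ Remark~\ref{useful_remark}). With $P$ full and faithful, part (ii) is formal. The forward direction holds for any functor. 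For the converse, suppose $P(\alpha)$ is a topological isomorphism for $\alpha = [\langle\bar f,\phi\rangle]$; here isomorphisms in the target category are exactly topological isomorphisms, since a continuous dense-range bijection with continuous inverse is one and conversely any topological isomorphism is such a morphism with such an inverse. By fullness the inverse $P(\alpha)^{-1}$ equals $P(\beta)$ for some morphism $\beta$, and then $P(\beta\alpha) = \mrm{id} = P(\mrm{id})$ and $P(\alpha\beta) = \mrm{id} = P(\mrm{id})$; faithfulness forces $\beta\alpha = \mrm{id}$ and $\alpha\beta = \mrm{id}$, so $\alpha$ is an isomorphism.

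I expect the main obstacle to be the fullness step, and within it the two nontrivial points are the surjectivity of each $f_n$ — which is precisely why the codomain category must be restricted to dense-range homomorphisms — and the verification that the squares commute, where one must track the interval-composition maps $p_{k,n}, q_{k,n}$ and exploit surjectivity of the projections in order to cancel. The bookkeeping of choosing $\phi$ increasing and unbounded (rather than merely pointwise large) also needs a word, but is routine.
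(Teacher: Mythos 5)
Your proof is correct and follows essentially the same route as the paper: the same definition of $P$ (coordinatewise, equivalently via the universal property), and the same fullness argument — factor $\pi^B_n \circ h$ through an open kernel $K^A_{\phi(n)}$, using dense range together with surjectivity of the projections to get surjectivity of each $f_n$, then cancel surjections to verify the commuting squares. The only difference is organizational: you isolate faithfulness explicitly and deduce (ii) from the formal fact that fully faithful functors reflect isomorphisms, whereas the paper proves (ii) by the equivalent direct computation via Remark~\ref{useful_remark}; its ``easy to verify'' step in the converse direction is exactly your faithfulness cancellation.
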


\begin{proof} All inverse systems will be of type $\omega$.
	
	\noindent  (i).  For  an   inverse system $(\ol C, \ol r) $ over $\Gps$,  let $P(\ol C, \ol r)$ denote the inverse limit $\varprojlim \, (\ol C, \ol r) $. For  inverse systems $(\ol A , \ol p)$,  $(\ol B, \ol q)$ and   a pre-morphism $(\bar f , \phi ) \colon \ol A \to \ol B$, write $G = P  (\ol A , \ol p)$ and $H = P (\ol B, \ol q)$, and  let $P(\bar f, \phi)\colon G \to H  $ be obtained by the maps $G \to A_{\phi(n)} \to B_n$  using the universal property of $H$. It is  easily verified that $P(\bar f, \phi)$  has dense range,  and that  the assignment $(\bar f , \phi ) \to P(\bar f , \phi )$ is functorial. So we have a functor that clearly induces a functor on the quotient category $\mrm{Pro}_\omega(\Gps)$.

	\smallskip 
	\noindent 
	In order to show that the functor $P$ is full, suppose that $\gamma \colon G\to H$ is a continuous homomorphism with dense range. We will define  a pre-morphism $(\bar f , \phi ) \colon (\ol A, \ol p)  \to (\ol B, \ol q) $  with $P(\bar f, \phi ) = \gamma$. 
	Let $a_n   $ and $b_n  $ be the kernels of the canonical projections $\pi^G_n: G \to A_n$ and $\pi^H_n : H \to B_n$ respectively.  Let $\phi$ be an increasing function such that $a_{\phi(n)} \subseteq \gamma^{-1}(b_n)$. Such $\phi$ exists because  $\gamma^{-1}( {b_n})$ is an open subgroup of $A_n$, and the $  {a_n}$ form a basis of neighbourhoods of $e$. (This part of the argument is special to procountable groups.) For $n \in \mathbb N$ we define $f_n: A_{\phi(n)} \rightarrow B_n$ to be the unique map such that  the   diagram
	\begin{center}
		\begin{tikzcd}
			G \arrow[r, "\gamma"] \arrow[d, "\pi^G_{\phi(n)}", left]& H \arrow[d, "\pi^H_n"] \\
			A_{\phi(n)} \arrow[r, "f_n"] & B_n \ .
		\end{tikzcd}
	\end{center}
commutes. Since $\gamma$ has dense range, $\pi^H_n \circ \gamma$ is onto, and hence $f_n$ is onto. 
To verify that for each $k >n$      the   diagram 
	\begin{center}
		\begin{tikzcd}
			A_{\phi(k)} \arrow[r, "f_k"] \arrow[d, "p_{\phi(k), \phi(n)}"'] & B_k \arrow[d, "q_{k,n}"] \\
			A_{\phi(n)} \arrow[r, "f_n"] & B_n  \  .
		\end{tikzcd}
	\end{center}
	commutes, it suffices to consider    the   diagram	
	\begin{center}
		\begin{tikzcd}
			G \arrow[r, "\gamma"]  \arrow[d, "\pi^G_{\phi(k)}", left]  & H \arrow[d, "\pi^H_k"] \\
			A_{\phi(k)} \arrow[r, "f_k"]\arrow[d, "p_{\phi(k), \phi(n)}"', left] & B_k \arrow[d, "q_{k,n}", right] \\
			A_{\phi(n)} \arrow[r, "f_n"] & B_n
		\end{tikzcd}
	\end{center}
	and realise  that the upper rectangle and the outer rectangle both   commute, noting that $p_{\phi(k), \phi(n)} \circ \pi^G_{\phi(k)} = \pi^G_{\phi(n)}$ and $q_{k,n} \circ \pi^H_{k} = \pi^H_n$.
	
	
	\medskip 
\noindent	(ii).  For the   forward direction, consider an isomorphism
	$$(\bar f , \phi ) \colon \ol A \rightleftarrows \ol B: (\bar g , \psi )$$
	in $\mrm{Pro}_\omega(\Gps)$. Then the composition $(\bar g , \psi ) \circ (\bar f , \phi )$, which we recall   equals  
	$$A_{\phi(\psi(n))} \xrightarrow{f_{\psi(n)}} B_{\psi(n)} \xrightarrow{g_n} A_n,$$
	is such that $(\bar g , \psi ) \circ (\bar f , \phi ) \sim (\mrm{id}_{\ol{A}}, \mrm{id}_{\mathbb N})$. By~\ref{useful_remark}, this simply means that $g_n \circ f_{\psi(n)} = p_{\phi(\psi(n)), n}$. We  argue similarly for the composition $(\bar f , \phi ) \circ (\bar g , \psi )$. Using this, it is easy to see that the following is an isomorphism of topological groups
	$$P(\bar f , \phi ) \colon P(\ol A) \rightleftarrows P(\ol B): P(\bar g , \psi ).$$
	For the converse  direction, consider an isomorphism of procountable groups:
	$$\gamma: G \rightleftarrows H : \delta.$$
	Using the fact that the functor $P$ is full,  we can rewrite this as follows (writing $\ol A$ in place of $(\ol A, \ol p) $ etc.) 
	$$P(\bar f , \phi ) \colon P(\ol A) \rightleftarrows P(\ol B): P(\bar g , \psi ),$$
	where   $G = \colon P(\ol A), H = P(\ol B)$ and $P(\bar f , \phi ) = \gamma, P(\bar g , \psi ) = \delta$. It is   easy to verify  that 
	$$(\bar f , \phi ) \colon \ol A \rightleftarrows \ol B: (\bar g , \psi )$$
	is an isomorphism in $\mrm{Pro}_\omega(\Gps)$.
\end{proof}

\section{Borel reduction of uniform homeomorphism to isomorphism of   procountable groups} \label{s:2}

	In this section we implement the second step of our proof strategy toward a proof of our Main Result \ref{th:main}, as laid out in the introduction. Definition~\ref{def:GT}   will provide a Borel reduction from uniform homeomorphism on Polish ultrametric spaces to isomorphism of   procountable groups.

 \subsection{The  groups $L(V)$} \label{ss:LV}
For a countably infinite  set $V$, we define the free product of cyclic groups of size $2$ with generators in $V$ by 
\[L(V) = \langle  s \mid s^2 = e \ \ ( s \in V) \rangle.\] (This group is also known as the free Coxeter group of rank~$\aleph_0$.) By the general theory of free products of group (see, e.g., \cite[IV.4.7]{LyndonSchupp1977}), each involution is conjugate to a generator, and no two generators are conjugate.  If $r \colon V \to W$, then we have a  map $\widehat r \colon L(V) \to L(W)$ induced by $v \mapsto r(v)$.

\subsection{The  reduction for the second step}
\begin{definition} \label{def:GT}
Given a pruned tree $T$, let $T_n = \{ s \in T \colon |s|= n\}$ be the $n$-th level of the tree $T$, and  let $p_n \colon T_{n+1} \to T_n$ be the   predecessor map. We may assume that $T_1$ is infinite because the trees we obtain through the coding in \ref{th:1} have that property. Let   \[G_T = \varprojlim_{n \in \mathbb{N} } (L(T_{n+1}), \widehat p_{n+1}).\] 
\end{definition}
 We view  $G_T$ concretely as a closed subgroup of $\S$ according to Remark~\ref{rem:diagram to closed subgroup}.
 Needless to say, the map sending $T$ to $G_T$ is Borel.
 The rest of this section will establish the following, which is the second step towards proving the Main Result~\ref{th:main}.

 \begin{theorem}\label{the_hammer} Let $T$ and $  U $ be pruned trees on $\mathbb{N}$ such that $T_1$ and $U_1$ are infinite. Then 
 	$[T] \cong_u [U]$ if and only if $G_T$ is topologically isomorphic to~$G_U$.
 \end{theorem}


\begin{proof}  Let the maps $p_n\colon T_{n+1} \to T_n$ and $q_n\colon U_{n+1} \to U_n$ be  given by the predecessor maps  on the trees $T$ and $U$, respectively. Write     \[V_n = L(T_n) \qquad W_n = L(U_n).\] So we have 
		inverse systems $(V_n, \widehat p_n)_{n \in \mathbb N}$ and $  (W_n,  \widehat  q_n )_{n \in \mathbb N}$ over the category $\Gps$  determining  $G_T$ and $G_U$, respectively (see \ref{def:GT}). 
	
	 \smallskip 	 \noindent
 First suppose 	that $[T] \cong_u [U]$ via a uniform homeomorphism $\Phi$.  By \ref{lem:basic} there are increasing functions $\phi, \psi $ on~$\mathbb{N}$ and  onto  functions   \begin{center} $r_n \colon T_{\phi(n)}\to U_n$ and $s_n \colon U_{\psi(n)}\to T_n$ \end{center} so that for each $x\in[T]$ and $y \in [U]$ we have  
 \begin{center}$\Phi(x)\uhr n = r_n(x \uhr {\phi(n)})$ and $\Phi^{-1} (y)\uhr n = s_n(y \uhr {\psi(n)})$. \end{center}
 Let  $ f_n = \widehat r_n $ and   $q_n = \widehat s_n$  as defined in Subsection~\ref{ss:LV}.  The   pre-morphisms
	  \begin{center}
$(\bar f , \phi ) \colon (V_n, \widehat p_n)_n  \to (W_n,  \widehat  q_n )_n$  and   $(\bar g , \psi ) \colon (W_n,  \widehat  q_n )_n  \to  (V_n, \widehat p_n)_n  $  
	  \end{center}   induce morphisms in $\mrm{Pro}_\omega(\Gps)$. 
	  For each $t \in T_{\phi(\psi(n))} $ and $u \in U_{\psi(\phi(n))} $,
	    $$(s_n\circ r_{\psi(n)})(t) \subseteq t  \text{ and } (r_{n}\circ s_{\phi(n)})(u) \subseteq u.$$ 
	    
\smallskip \noindent	    
So $(V_n, \widehat p_n)_n$ and $  (W_n,  \widehat  q_n )_n$ are isomorphic in the category $\mrm{Pro}_\omega(\Gps)$ (to see this, recall    that the binding maps $\widehat{p}_n$ and $\widehat{q}_n$ act as ``predecessor maps'', since so do the maps $p_n$ and $q_n$). Hence $G_T$ and $G_U$ are isomorphic by~\ref{preserving_iso_progroups}. 
	    
\smallskip \noindent	    
 Conversely, suppose  that $G_T$ and $G_U$ are isomorphic. By  \ref{preserving_iso_progroups}   there are pre-morphisms  $(\ol f, \phi) \colon (V_n, \widehat p_n)_n  
 \to (W_n,  \widehat  q_n )_n $ and $(\ol g, \psi) \colon (W_n, \widehat q_n)_n  
 \to (V_n,  \widehat  p_n )_n $ that induce an isomorphism $(V_n, \widehat p_n)_n  
 \to (W_n,  \widehat  q_n )_n $ and its inverse, respectively,  in $\mrm{Pro}_\omega(\Gps)$. By virtue of being pre-morphisms, the following diagrams are commutative for each $k> n$:  
 \begin{center}
 	\begin{tikzcd}
 		V_{\phi(k)} \arrow[r, "f_k"] \arrow[d, "\widehat{p}_{\phi(k), \phi(n)}"'] & W_k \arrow[d, "\widehat{q}_{k,n}"] \\
 		V_{\phi(n)} \arrow[r, "f_n"] & W_n
 	\end{tikzcd}
 	\qquad
 	\begin{tikzcd}
 		W_{\psi(k)} \arrow[r, "g_k"] \arrow[d, "\widehat{q}_{\psi(k), \psi(n)}"'] 
 		& V_k \arrow[d, "\widehat{p}_{k,n}"] \\
 		W_{\psi(n)} \arrow[r, "g_n"] 
 		& V_n
 	\end{tikzcd}

 \end{center}
 Because the pre-morphisms induce  inverses of each other, we have 
 \begin{equation}\label{eqn:nice}\tag{$\star$} \widehat p _{\phi(\psi(n)),n} = g_n \circ f_{\psi(n)}\qquad   \widehat q _{\psi(\phi(n)),n} = f_n \circ g_{\phi(n)}. \end{equation}
 
 
 Let $r_n \colon \subseteq T_{\phi(n)} \to U_n$ be defined by \begin{center}
 	$r_n(t)= u$ if $f_n(t) $ is conjugate in $W_n$  to   $u$. 
 \end{center} (Note here that $f_n(t)^2 = e$, so $f_n(t) $ is conjugate to a unique generator, or it equals~$e$. The second case could \textit{prima facie}  happen,  so $r_n$ could be a partial map, as indicated in the notation.)    Similarly, let $s_n \colon \subseteq U_{\psi(n)} \to T_n$ be defined by \begin{center}
 	$s_n(u)= t$ if $g_n(u) $ is conjugate in $V_n$  to   $t$. 
 \end{center} 
 
 
 We claim that $s_n \circ r_{\psi(n)}= p_{\phi(\psi(n)),n}$ for each $n$.
 To verify this, let $t \in T_{\phi(\psi(n))}$. By (\ref{eqn:nice}) we have that $g_n(f_{\psi(n)}(t)) \neq e$. So
 $$f_{\psi(n)}(t) \sim_{W_{\psi(n)}}  r_{\psi(n)}(t) \in U_{\psi(n)}$$
 where $\sim_L$ denotes conjugacy in a group $L$. 
 So	    by (\ref{eqn:nice})  again
 \[\widehat p _{\phi(\psi(n)),n} (t)= g_n(f_{\psi(n)}(t)) \sim_{V_n} g_n( r_{\psi(n)}(t) ) \sim_{V_n} s_n (r_{\psi(n)}(t)).\]
 So $p _{\phi(\psi(n)),n} = s_n \circ r_{\psi(n)} $. Similarly, $q_{\psi(\phi(n)),n} = r_n \circ s_{\phi(n)}$. 
 Since the compositions are onto and total, each  map $r_n$ and $s_n$ is onto. Thus the maps $r_n$ and $s_n$ are also total (composing a partial after an onto map yields a partial map).  
 
 Corresponding to the diagrams in EpiGrp above,  we have commutative diagrams of onto maps between sets
 \begin{center}
 	\begin{tikzcd}
 		T_{\phi(k)} \arrow[r, "r_k"] \arrow[d, "p_{\phi(k), \phi(n)}"'] & U_k \arrow[d, "q_{k,n}"] \\
 		T_{\phi(n)} \arrow[r, "r_n"] & U_n
 	\end{tikzcd}
 	\qquad
 	\begin{tikzcd}
 		U_{\psi(k)} \arrow[r, "s_k"] \arrow[d, "q_{\psi(k), \psi(n)}"'] & T_k \arrow[d, "q_{k,n}"] \\
 		U_{\psi(n)} \arrow[r, "s_n"] & T_n
 	\end{tikzcd}
 \end{center} Hence  $[T] \cong_u [U]$ by \ref{lem:basic},  as required.

 \end{proof}
 
 \section{Strengthening to 2-nilpotent groups of exponent $p$}
Strengthening the previous result, this section establishes  a Borel reduction from uniform homeomorphism on Polish ultrametric spaces to isomorphism of nilpotent-2 procountable groups of exponent  $p$, where $p$ is a fixed  odd prime. We only need to modify the second step.

\subsection{The 2-nilpotent groups $H(X)$}
Let $N$ be the free nilpotent  group of class $2$ and exponent~$p$ on free generators $\{x_i : i \in \mathbb{N}\}$. For distinct $r ,  s \in \mathbb N$ we write $$ x_{r,s} = [x_r, x_s].$$ As noted in Mekler~\cite{Mekler1985}, the centre $Z(N)$ of $N$ is an elementary abelian $p$-group (and hence an  $\mathbb{F}_p$ vector space) with basis $x_{r,s}$ for $r< s$. Given an undirected irreflexive  graph~$A$ with vertex set $\NN$, Mekler   sets 
\begin{equation*} \tag{$\diamond$} \label{eqn:Mekler} G(A) = N/ \langle x_{r,s} \colon \, r A s \rangle. \end{equation*}
The centre  $Z= Z(G(A))$ is an elementary abelian $p$-group  with basis the $x_{r,s}$ such that  $\lnot r A s$.  Also $G(A)/Z $ is 
an abelian group of exponent $p$ freely generated by the $Z x_i$ (recall that in a $2$-nilpotent group the commutator subgroup is contained in the center).  
Mekler  obtained a normal form for elements of  $G(A)$.
\begin{lemma} \label{lem: NF G}    {\rm (i) Every element $c$ of $Z$ can be uniquely written  in the form 
		$$ \prod_{(r, s ) \in E} x_{r,s}^{\beta_{r,s}}$$
		where    $E \subseteq \omega \times \NN$ is a finite set of pairs $(r, s )$ with $r<s$ and  $\lnot rAs$, and $0 < \beta_{r,s}  < p$. 
		
		\noindent (ii) Every element of $G(A)$ can be uniquely written  in the form 
		$c \cdot v$  where  $c \in Z$, and $v=  \prod_{i \in C} x_i^{\alpha_i}$, for $C \subseteq \NN$   finite and $0 < \alpha_i < p$.  (The product  $\prod_{i \in C} x_i^{\alpha_i}$ is interpreted    along   the indices  in ascending order.)}
	
\end{lemma}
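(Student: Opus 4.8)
The plan is to reduce both parts to linear algebra over $\mathbb{F}_p$ in the centre, together with a single choice of set-theoretic section of the quotient by $Z$, relying on the structural facts recorded just before the statement: namely that $Z = Z(G(A))$ is an $\mathbb{F}_p$-vector space with basis the $x_{r,s}$ for $r<s$ and $\lnot rAs$, and that $G(A)/Z$ is abelian of exponent $p$ freely generated by the cosets $Zx_i$.

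For part (i), first I would observe that the normal subgroup $\langle x_{r,s} \colon rAs\rangle$ by which $N$ is quotiented is central, hence coincides with the $\mathbb{F}_p$-subspace $W$ of $Z(N)$ spanned by $\{x_{r,s}\colon rAs\}$ (for central elements the normal closure equals the ordinary span). Since $\{x_{r,s}\colon r<s\}$ is a basis of $Z(N)$, the complementary family $\{x_{r,s}\colon r<s,\ \lnot rAs\}$ descends to a basis of $Z = Z(N)/W$. The multiplicative expression $\prod_{(r,s)\in E} x_{r,s}^{\beta_{r,s}}$ is then just the additive combination $\sum_{(r,s)\in E}\beta_{r,s}\,x_{r,s}$ rewritten in $Z$ (the factors commute, being central), so the claim is precisely uniqueness of coordinates with respect to a basis: $E$ is the support of the coordinate vector, and the $\beta_{r,s}\in\{1,\dots,p-1\}$ are its nonzero entries.

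For part (ii) I would work with the short exact sequence $1 \to Z \to G(A) \xrightarrow{\,q\,} G(A)/Z \to 1$. Because $G(A)/Z$ is free abelian of exponent $p$ on the $Zx_i$, every element has a unique representation $\prod_{i\in C}(Zx_i)^{\alpha_i}$ with $C$ finite and $0<\alpha_i<p$. I would lift this to $v = \prod_{i\in C} x_i^{\alpha_i}$, with the product taken in ascending order of the indices; fixing the order pins down a well-defined representative, even though $G(A)$ itself is nonabelian. Then, given $g\in G(A)$, letting $v$ be the lift of $q(g)$ just described, the element $c := g v^{-1}$ lies in $\ker q = Z$, so $g = c\cdot v$. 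For uniqueness, if $g = cv = c'v'$ with $c,c'\in Z$ and $v,v'$ of the prescribed form, applying $q$ yields $q(v)=q(v')=q(g)$, hence $v=v'$ by uniqueness in $G(A)/Z$ combined with the ascending-order convention, and therefore $c = gv^{-1} = c'$.

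The only point requiring care, rather than a genuine obstacle, is the well-definedness of the section in (ii): the lift $v$ of a coset depends on the ordering convention precisely because $G(A)$ is noncommutative, so I must insist that the ascending-order product is used uniformly, which guarantees a single canonical representative per coset of $Z$. Everything else is a direct translation between the additive language of $\mathbb{F}_p$-vector spaces and the multiplicative notation for $Z$ and $G(A)$, and follows immediately from the basis facts quoted above.
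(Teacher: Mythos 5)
Your proposal is correct, but there is nothing in the paper to compare it against: the paper gives no proof of this lemma at all. It is quoted as Mekler's normal form result, and the two structural facts you invoke (that $Z$ is an $\mathbb{F}_p$-vector space with basis the $x_{r,s}$ for $r<s$, $\lnot rAs$, and that $G(A)/Z$ is abelian of exponent $p$ freely generated by the cosets $Zx_i$) are exactly what the paper records, also without proof, in the sentences immediately preceding the statement. Granting those facts, your derivation is sound and is the natural one: part (i) is indeed nothing more than uniqueness of coordinates with respect to a basis, written multiplicatively, and your part (ii) --- taking the ascending-order product as a set-theoretic section of $G(A)\to G(A)/Z$ and pulling uniqueness back from the quotient --- is complete; the point you single out (that the lift is only well defined once the order of the factors is fixed) is the right one to worry about. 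Two caveats are worth recording. First, the genuine mathematical content of the lemma is hidden entirely inside the quoted facts: proving that $\{x_{r,s}\colon r<s\}$ is a basis of $Z(N)$ requires the commutator calculus of free $2$-nilpotent groups of exponent $p$ (Hall basis/collection), and proving that nothing outside the image of $Z(N)$ becomes central in $G(A)$ depends on the graph --- it fails, for instance, for a complete graph, where $G(A)$ is abelian and $Z(G(A))=G(A)$; it holds for the matching graph the paper actually uses. Your write-up is explicit about relying on these facts, so this is a division of labor consistent with the paper's own presentation, not a gap. Second, and relatedly, your identification $Z=Z(N)/W$ is not a formality (the center of a quotient can properly contain the image of the center), but since it is subsumed in the quoted basis fact for $Z(G(A))$, your use of it is legitimate.
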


The following fact already noted in \cite{Mekler1985} is evident.
\begin{fact} \label{lem:com} Let $C, D \sub \NN$.  The following holds in $ G(A)$.  \[[\pp \alpha r C,  \pp \beta s D] = \prod_{r \in C, \, s \in D, \,  r< s, \, \lnot Ars} x_{r,s}^{\alpha_r \beta_s- \alpha_s \beta_r}\]\end{fact}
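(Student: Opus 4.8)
The plan is to exploit that $G(A)$ is nilpotent of class $2$, so that $[G(A),G(A)] \subseteq Z$ and the commutator map is bilinear into $Z$. Concretely, I would start from the universal commutator identities $[ab,c]=[a,c]^{b}[b,c]$ and $[a,bc]=[a,c][a,b]^{c}$, and combine them with the observation that every commutator is central and hence fixed under conjugation; this collapses the conjugation superscripts and gives $[ab,c]=[a,c][b,c]$ and $[a,bc]=[a,b][a,c]$. Writing $Z$ additively as an $\mathbb{F}_p$-vector space, the map $(g,h)\mapsto[g,h]$ is thus bi-additive into $Z$, and an immediate induction yields $[g^{\alpha},h^{\beta}]=[g,h]^{\alpha\beta}$ for all integers $\alpha,\beta$.

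Applying this repeatedly to the two finite products, and using that all the resulting factors are central (so their order is irrelevant), I would expand
\[ [\pp \alpha r C, \pp \beta s D] = \prod_{r \in C,\, s \in D} [x_r, x_s]^{\alpha_r \beta_s}. \]
The factors with $r=s$ are trivial since $[x_r,x_r]=1$. For $r\neq s$ we have $[x_r,x_s]=x_{r,s}$, and this equals the identity exactly when $rAs$, because such $x_{r,s}$ generate the normal subgroup divided out in (\ref{eqn:Mekler}). Hence only the ordered pairs $(r,s)$ with $r\neq s$ and $\lnot Ars$ contribute.

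Extending $\alpha,\beta$ by $0$ outside $C$ and $D$ respectively, I would then merge the two orientations of each unordered pair. For $i<j$ with $\lnot Aij$, the pair $(i,j)$ contributes $x_{i,j}^{\alpha_i\beta_j}$ and the pair $(j,i)$ contributes $x_{j,i}^{\alpha_j\beta_i}$; since $x_{j,i}=[x_j,x_i]=[x_i,x_j]^{-1}=x_{i,j}^{-1}$ and $Z$ is abelian, these combine to $x_{i,j}^{\alpha_i\beta_j-\alpha_j\beta_i}$. Collecting over all such pairs produces exactly the right-hand side of the asserted identity.

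The computation is entirely routine; the only points that genuinely require attention are the centrality argument that removes the conjugation superscripts — which is precisely where nilpotency of class $2$ enters — the vanishing of $x_{r,s}$ exactly on the edges of $A$, and the sign bookkeeping when the two orientations of each unordered pair are combined. None of these constitutes a real obstacle, which is consistent with the statement being recorded as an evident \emph{Fact}.
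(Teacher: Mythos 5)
Your proof is correct; since the paper records this Fact as evident (citing Mekler) and gives no proof of its own, there is nothing to diverge from, and your computation --- bilinearity of the commutator map in a class-$2$ exponent-$p$ group (centrality of commutators collapsing the conjugation superscripts), vanishing of $x_{r,s}$ exactly on the edges of $A$, and the antisymmetry $x_{s,r}=x_{r,s}^{-1}$ used to merge the two orientations of each pair --- is precisely the routine verification intended. Your explicit convention of extending $\alpha$ and $\beta$ by $0$ outside $C$ and $D$ is also the right reading of the right-hand side, and it matches the convention the paper adopts implicitly when this Fact is applied in the proof of Lemma~\ref{lem:commut}.
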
  

We will only use Mekler's construction for a very simple infinite graph: the one such that each connected component has size 2. 

\begin{notation}\label{L_group_notation} For $n \in \mathbb N$ write $n^* = \lfloor n/2 \rfloor$.  Let $A = \{ \langle r, s \rangle \colon \, r \neq s \land r^* = s^*\}$. Let $L = G(A)$.  For $u \in L$ we write  $\underline u = uZ(L)$, which is an element of the group $L/Z(L)$.
\end{notation} 

The idea is that $G(A)$ has the minimum commutativity: any pair of elements that commute nontrivially have to generate the same subgroup as some pair  $x_i, x_{i+1}$ for even $i$, in the central quotient. 
\begin{lemma} \label{lem:commut} {\rm Suppose that $c,d \in L$ are such that $[c,d] =e$. 
		
		\noindent Either there is an even $i \in \mathbb N$ such that $\langle \underline c, \underline d \rangle = \langle \underline x_i, {\underline x}_{i+1} \rangle$, or  the rank of $\langle \underline c, \underline d \rangle$ as a subgroup of $L/Z(L)$ is at most $1$. }
\end{lemma}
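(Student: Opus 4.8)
The plan is to translate the hypothesis into the vanishing of $2\times 2$ minors over the field $\mathbb{F}_p$, and then to finish by pure linear algebra. First I would observe that in the class-$2$ group $L$ the commutator $[c,d]$ is unchanged when $c$ or $d$ is multiplied by a central element, so it depends only on the cosets $\underline c,\underline d\in L/Z(L)$. Using the normal form of Lemma~\ref{lem: NF G} I would write $c=c_0\prod_r x_r^{\alpha_r}$ and $d=d_0\prod_s x_s^{\beta_s}$ with $c_0,d_0\in Z(L)$, and, setting $\alpha_i=0$ (resp.\ $\beta_i=0$) off the finite supports, I would record $\underline c,\underline d$ as the coordinate vectors $(\alpha_i),(\beta_i)$ with respect to the $\mathbb{F}_p$-basis $\{\underline x_i\}_{i\in\mathbb N}$ of $L/Z(L)$.

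Next I would apply Fact~\ref{lem:com} to compute
\[
  [c,d] = \prod_{r<s,\ \lnot Ars} x_{r,s}^{\,\alpha_r\beta_s-\alpha_s\beta_r}.
\]
Since the elements $x_{r,s}$ with $r<s$ and $\lnot Ars$ form a basis of $Z(L)$, the hypothesis $[c,d]=e$ is equivalent to the system of congruences $\alpha_r\beta_s-\alpha_s\beta_r\equiv 0\pmod p$ for all $r<s$ with $\lnot Ars$. The left-hand side is precisely the $2\times 2$ minor on columns $r,s$ of the $2\times\infty$ matrix with rows $(\alpha_i)$ and $(\beta_i)$; and by the definition of $A$ in Notation~\ref{L_group_notation}, the pairs $r<s$ with $Ars$ are exactly the partner pairs $\{2k,2k+1\}$. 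Thus $[c,d]=e$ says that every $2\times 2$ minor of this matrix vanishes, except possibly those indexed by a single partner pair.

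With this reformulation the two alternatives of the lemma become the two possible ranks of the matrix. If the rank is at most $1$, then $\underline c,\underline d$ are linearly dependent and $\langle\underline c,\underline d\rangle$ has rank at most $1$, which is the second alternative. Otherwise the rank is $2$, so some minor is nonzero, and by the previous paragraph it must come from a partner pair $\{2k,2k+1\}$; that is, $\alpha_{2k}\beta_{2k+1}-\alpha_{2k+1}\beta_{2k}\neq 0$. I would then fix any index $j\notin\{2k,2k+1\}$ and note that neither $\{j,2k\}$ nor $\{j,2k+1\}$ is a partner pair, so the two minors on these column pairs vanish. Read as a homogeneous linear system in the unknowns $(\alpha_j,\beta_j)$, its coefficient determinant is exactly the nonzero quantity $\alpha_{2k}\beta_{2k+1}-\alpha_{2k+1}\beta_{2k}$; since $\mathbb{F}_p$ is a field this forces $\alpha_j=\beta_j=0$. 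Hence $\underline c,\underline d$ both lie in the two-dimensional space $\langle\underline x_{2k},\underline x_{2k+1}\rangle$, and being linearly independent they span it, giving $\langle\underline c,\underline d\rangle=\langle\underline x_{2k},\underline x_{2k+1}\rangle$ with $i=2k$ even.

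The computation is essentially routine; the only point requiring care is the bookkeeping when passing from the group commutator to the skew-symmetric expression $\alpha_r\beta_s-\alpha_s\beta_r$ — keeping the convention $r<s$ consistent and correctly identifying the edge set of $A$ with the partner pairs, so that the only \emph{permitted} nonzero minors are those internal to a single two-element component. Once the statement is recast as a condition on the vanishing minors of a rank-$2$ matrix over $\mathbb{F}_p$, the dichotomy in the conclusion is immediate.
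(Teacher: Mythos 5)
Your proposal is correct, and its core is the same as the paper's: both proofs use Mekler's normal form (Lemma~\ref{lem: NF G}) and Fact~\ref{lem:com} to translate $[c,d]=e$ into the vanishing over $\mathbb{F}_p$ of all skew products $\alpha_r\beta_s-\alpha_s\beta_r$ for pairs $r<s$ that are \emph{not} partner pairs $\{2k,2k+1\}$. Where you diverge is the endgame. The paper argues contrapositively: assuming no even $i$ works, it analyzes the supports $C,D$ of $c,d$ combinatorially (neither can be a singleton, then $C=D$) and exhibits an explicit scalar $\lambda$ with $\underline c=\underline d^{\lambda}$. You instead split on the rank of the $2\times\infty$ coefficient matrix: rank $\leq 1$ gives the second alternative outright, while in the rank-$2$ case the nonzero minor must sit on a partner pair $\{2k,2k+1\}$, and for any other column $j$ the two vanishing minors against columns $2k$ and $2k+1$ form a homogeneous $2\times 2$ system in $(\alpha_j,\beta_j)$ whose determinant is exactly that nonzero minor, forcing $\alpha_j=\beta_j=0$. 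This Cramer-style argument is slightly tidier, and it handles transparently a configuration the paper's opening line glosses over: if both supports lie inside a single partner pair but $\underline c,\underline d$ are linearly dependent, the paper's inference ``no even $i$ works, hence no $k$ with $C,D\subseteq\{2k,2k+1\}$'' is not literally valid (e.g.\ $c=x_{2k}^2$, $d=x_{2k}$), and its proof needs the implicit reduction ``WLOG the rank is $2$''; in your organization that configuration simply falls under the rank-$\leq 1$ case. In exchange, the paper's version produces the proportionality constant $\lambda$ explicitly, but as a proof of the stated dichotomy yours is complete as written.
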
 

\begin{proof}
	We may assume that $c = \pp \alpha r C$ and $d = \pp \alpha s D$ as in \ref{lem: NF G}(ii). If $r \in \NN\setminus   C$   we let $\alpha_r=0$, and if $s \in \NN\setminus   D$ we let $\beta_s=0$. Suppose there is \emph{no}  even $i \in \mathbb N$ such that $\langle \underline c, \underline d \rangle = \langle \underline x_i, {\underline x}_{i+1} \rangle$. We show that $\underline c = \underline d ^\lambda$ for some $\lambda \in \mathbb N$. 
	
	\smallskip \noindent By assumption  there is no $k \in \NN$ such that both $C$ and $D$ are contained in \mbox{$\{2k,2k+1\}$}.  Note that neither $C$ nor $D$ can be a singleton: if (say) $C =\{r\}$, 
	then there is $s \in D$ such that $r^* \neq s^*$. Then $\alpha_r \beta_s \neq 0$ while $\alpha _s \beta_r=0$, which contradicts the assumption that $[c,d] \in Z(L)$  because of \ref{lem:com}.

	\smallskip \noindent Next  we claim that $C=D$.   Assume for contradiction that $C $ is not contained in $D$ (the case that $D$ is not contained in $C$ is symmetric).  Let $r \in C \setminus D$. Then there is $s \in D$ such that $r^* \neq s^*$ because $D$ is not a singleton. So $\alpha_r \beta_s\neq 0$ while $\alpha_s \beta_r =0$.   Thus $C=D$.

	\smallskip \noindent Now let $m = \min (C)$, and let $\lambda = \alpha_m/\beta_m$ as an element of $\mathbb F_p$. If $k \in C$ and $m ^* < k^*$,   then $\alpha_k/\beta_k = \lambda$. Some such $k$ exists. So if  $m$ is even  and $m+1 \in C$,  then we also have $ \alpha_{m+1}/\beta_{m+1} = \lambda$. 
	
	\smallskip \noindent We conclude that $\alpha_r = \beta_r \lambda $ holds in $\mathbb F_p$ for each $r \in C$. This shows that  $\underline c = \underline d ^\lambda$, and hence that the rank of $\langle \underline c, \underline d \rangle$  is at most~$1$.
\end{proof}

\begin{definition} \label{def:LX} Given a countably  infinite set $X$, let  $N(X)$  be the free 2-nilpotent group   of exponent~$p$   on distinct free generators   $\{a_v, b_v \colon v \in X\}$.  Let 
	\[H(X) = N(X) /\langle \{[a_v,b_v]  \colon v \in X\} \rangle. \]
	Given a set $X$, for $v\in X$  we will interpret    $a_v$ and $b_v$ in $H(X)$. Any surjection   $q \colon X \to Y$    induces a group epimorphism  $\widehat q \colon H(X) \to H(Y)$ given by mapping the equivalence class of $a_v $ to the equivalence  class  $a_{q(v)}$, and similarly for $b_v $ and $ b_{q(v)}$.  \end{definition} 
An arbitrary   bijection $h \colon X \to \mathbb N$ induces an isomorphism $H(X) \to L$ (for $L$ as in~\ref{L_group_notation}) by sending $a_v$ to the equivalence class of $x_{2h(v)}$ and $b_v$ to the equivalence class of $x_{2h(v) + 1}$ given by the presentation in (\ref{eqn:Mekler}). So we can apply Lemma~\ref{lem:commut} to $H(X)$ as well. 
This lemma can now be used to turn certain group epimorphism $H(X) \to H(Y)$ into surjections  $X \to Y$ in a functorial manner. This will be needed  to eventually verify that  the reduction for the second step (\ref{def:GT} below) works.

If $r \colon G \to H$ is an epimorphism of groups, we denote by $\ul r$ the induced epimorphism $G/Z(G) \to H/Z(H)$. 

\begin{lemma} \label{lem: silly}
	{\rm Let $\+ L$ be the category of groups of the form $H(X)$ with morphisms the epimomorphisms $g \colon H(X) \to H(Y)$ such that  for each $v \in X$,  the rank of $\langle \underline {g(a_v)}, \underline {g(b_v)} \rangle$ as a subgroup of $H(Y)/Z(H(Y))$ equals $2$. A functor $F $ from $ \+ L $ to the category of countable sets with mappings between them  is declared by sending $H(X)$ to $X$, and sending a morphism $g: H(X) \to H(Y)$ to the map $p= F(g) \colon X \to Y$ such that    \[\langle \underline {g(a_v)}, \underline {g(b_v)} \rangle = \langle \underline {a_{p(v)}}, \underline {b_{p(v)}}\rangle. \]  Furthermore, if $q \colon X \to Y$ is a surjection, then $\widehat  q$ is a morphism of $\+ L$, and $F(\widehat q) = q$.} \end{lemma}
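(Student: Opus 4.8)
The plan is to verify that $F$ is well-defined on objects and morphisms, that it respects composition and identities, and finally to check the stated compatibility with $\widehat q$.

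First I would address well-definedness of $F$ on morphisms. Given a morphism $g \colon L(X) \to L(Y)$ in $\+ L$, I need to produce $p(v) \in Y$ for each $v \in X$. The elements $a_v, b_v$ commute in $L(X)$ since $[a_v,b_v]=e$ by the defining relations of $L(X)$, so their images $g(a_v), g(b_v)$ commute in $L(Y)$. By the hypothesis defining $\+ L$, the rank of $\langle \underline{g(a_v)}, \underline{g(b_v)}\rangle$ equals $2$. Applying Lemma~\ref{lem:commut} to $L(Y)$ (via the isomorphism $L(Y)\cong L$ noted after Definition~\ref{def:LX}), the only way two commuting elements can generate a rank-$2$ subgroup of the central quotient is the first alternative: there is an even $i$, i.e.\ a unique vertex $w \in Y$, with $\langle \underline{g(a_v)}, \underline{g(b_v)}\rangle = \langle \underline{a_w}, \underline{b_w}\rangle$. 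I set $p(v)=w$. Uniqueness of $w$ follows because distinct vertices $w \neq w'$ give distinct rank-$2$ subgroups $\langle \underline{a_w}, \underline{b_w}\rangle \neq \langle \underline{a_{w'}}, \underline{b_{w'}}\rangle$ of the free abelian-of-exponent-$p$ group $L(Y)/Z(L(Y))$; this is where the freeness of the $\underline{a_w}, \underline{b_w}$ as generators of the central quotient is used. I expect this identification step to be the crux of the argument, since everything downstream is formal once $p$ is canonically extracted.

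Next I would check functoriality. For the identity $\mathrm{id}_{L(X)}$, clearly $\langle \underline{a_v}, \underline{b_v}\rangle = \langle \underline{a_v}, \underline{b_v}\rangle$, so $F(\mathrm{id}_{L(X)}) = \mathrm{id}_X$. For composition, let $g \colon L(X)\to L(Y)$ and $h \colon L(Y)\to L(Z)$ be morphisms of $\+ L$ with $F(g)=p$ and $F(h)=q$. I must show $F(h\circ g) = q\circ p$. Fix $v\in X$ and let $w=p(v)$, so $\langle \underline{g(a_v)}, \underline{g(b_v)}\rangle = \langle \underline{a_w}, \underline{b_w}\rangle$ inside $L(Y)/Z(L(Y))$. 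Applying the induced epimorphism $\underline h$ on central quotients and using that $\underline h$ sends $\langle \underline{a_w},\underline{b_w}\rangle$ onto $\langle \underline{h(a_w)}, \underline{h(b_w)}\rangle = \langle \underline{a_{q(w)}}, \underline{b_{q(w)}}\rangle$, I obtain $\langle \underline{h(g(a_v))}, \underline{h(g(b_v))}\rangle = \langle \underline{a_{q(w)}}, \underline{b_{q(w)}}\rangle$. Hence $F(h\circ g)(v) = q(w) = q(p(v))$, as required. (Here one should also note that $h\circ g$ is itself a morphism of $\+ L$, since the displayed subgroup has rank exactly $2$.)

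Finally I would verify the last assertion. Given a surjection $q\colon X\to Y$, the induced epimorphism $\widehat q \colon L(X)\to L(Y)$ of Definition~\ref{def:LX} sends $a_v \mapsto a_{q(v)}$ and $b_v\mapsto b_{q(v)}$ (representatives), so $\underline{\widehat q(a_v)} = \underline{a_{q(v)}}$ and $\underline{\widehat q(b_v)} = \underline{b_{q(v)}}$. Thus $\langle \underline{\widehat q(a_v)}, \underline{\widehat q(b_v)}\rangle = \langle \underline{a_{q(v)}}, \underline{b_{q(v)}}\rangle$, which has rank $2$ in $L(Y)/Z(L(Y))$ because the $\underline{a_w},\underline{b_w}$ freely generate that quotient; this confirms $\widehat q$ is a morphism of $\+ L$ and simultaneously reads off $F(\widehat q)=q$ directly from the defining equation of $F$.
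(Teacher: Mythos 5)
Your proposal is correct and takes essentially the same approach as the paper: well-definedness of $F$ is obtained by applying Lemma~\ref{lem:commut} to the commuting pair $c=g(a_v)$, $d=g(b_v)$, where the rank-$2$ hypothesis rules out the second alternative and forces $\langle \underline{g(a_v)}, \underline{g(b_v)}\rangle = \langle \underline{a_w}, \underline{b_w}\rangle$ for a unique $w$. The functoriality and $F(\widehat q)=q$ checks, which the paper dismisses as easy verifications, are exactly the routine arguments you spell out.
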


\begin{proof}
	To show that $F$ is well-defined for a morphism $g$ of $\+ L$ as above, 	it suffices to apply   \ref{lem:commut} to  $L = H(Y)$, $c=g(a_v) , d= g(b_v)$, noting that $[c,d]= e$. Thus in $H(Y)/Z(H(Y))$ we have $ \langle  \ul c, \ul d \rangle = \langle \ul a_y, \ul  b_y \rangle$ for a unique $y \in Y$, and hence $p(v) = y$ is defined.  One verifies easily that  $F$ is a functor, and that $F(\widehat q)= q$. 
\end{proof}

\subsection{The  new reduction for the second step}
	Given a pruned tree $T$, let  the sets $T_n $ be as before, and let   \[H_T = \varprojlim_{n \in \mathbb{N} } (H(T_{n+1}), \widehat p_{n+1}).\] 
The map sending $T$ to $H_T$ is Borel.
 
\begin{theorem}\label{the_hammer nil} Let $T$ and $  U $ be pruned trees on $\mathbb{N}$ such that $T_1$ and $U_1$ are infinite. Then 
	$[T] \cong_u [U]$ if and only if $H_T$ is topologically isomorphic to~$H_U$.
\end{theorem}


\begin{proof}  As before, let the maps $p_n\colon T_{n+1} \to T_n$ and $q_n\colon U_{n+1} \to U_n$ be  given by the predecessor maps  on the trees $T$ and $U$, respectively. Now write     \[V_n = H(T_n) \qquad W_n = H(U_n).\] So we have 
	inverse systems $(V_n, \widehat p_n)_{n \in \mathbb N}$ and $  (W_n,  \widehat  q_n )_{n \in \mathbb N}$ over the category $\Gps$  determining  $H_T$ and $H_U$, respectively (see \ref{def:LX}). 
	
	\smallskip 	 \noindent
If  $[T] \cong_u [U]$ via a uniform homeomorphism $\Phi$, then $H_T \cong H_U$ as before.
Now  suppose  that $H_T$ and $H_U$ are isomorphic. By  \ref{preserving_iso_progroups}   there are pre-morphisms  $(\ol f, \phi) \colon (V_n, \widehat p_n)_n  
	\to (W_n,  \widehat  q_n )_n $ and $(\ol g, \psi) \colon (W_n, \widehat q_n)_n  
	\to (V_n,  \widehat  p_n )_n $ that induce an isomorphism $(V_n, \widehat p_n)_n  
	\to (W_n,  \widehat  q_n )_n $ and its inverse, respectively,  in $\mrm{Pro}_\omega(\Gps)$. 
	Because the pre-morphisms induce  inverses of each other, we have 
	\begin{equation}\label{eqn:nice}\tag{$\star$} \widehat p _{\phi(\psi(n)),n} = g_n \circ f_{\psi(n)}\qquad   \widehat q _{\psi(\phi(n)),n} = f_n \circ g_{\phi(n)}. \end{equation}
	Using the notation of \ref{lem: silly}, the maps on the left hand sides of the equation are morphisms of the category $\+ L$. We claim that  $g_n$ and $f_n$ are also morphisms of $\+ L$ for each $n$.  We verify it  for $g_n$, the case of $f_n$ being symmetric. We use  the   left equation in (\ref{eqn:nice}). Write $p$ for $p _{\phi(\psi(n)),n}$. For each $v \in  T_{\phi(\psi(n)} $, by definition we have $\widehat p (\langle a_v, b_v \rangle) = \langle  a_{p(v)},  b_{p(v)} \rangle$. Thus  the   rank of $\langle \ul a_v, \ul b_v \rangle$ cannot be reduced by  applying    the homomorphism  induced by $f_{\psi(n)}$ on  the central quotients. So $f_{\psi(n)}$ is in $\+ L$ by \ref{lem: silly}, and hence $g_n$ is in $\+ L$ by a  similar argument. This shows the claim.
	
	\smallskip \noindent
	Since $(\ol f, \phi)$ and $(\ol g, \psi)$  are pre-morphisms, the following diagrams commute  for each $k> n$:  
	\begin{center}
		\begin{tikzcd}
			V_{\phi(k)} \arrow[r, "f_k"] \arrow[d, "\widehat{p}_{\phi(k), \phi(n)}"'] & W_k \arrow[d, "\widehat{q}_{k,n}"] \\
			V_{\phi(n)} \arrow[r, "f_n"] & W_n
		\end{tikzcd}
		\qquad
		\begin{tikzcd}
			W_{\psi(k)} \arrow[r, "g_k"] \arrow[d, "\widehat{q}_{\psi(k), \psi(n)}"'] 
			& V_k \arrow[d, "\widehat{p}_{k,n}"] \\
			W_{\psi(n)} \arrow[r, "g_n"] 
			& V_n
		\end{tikzcd}

	\end{center}
	Let 
	$r_n \colon   T_{\phi(n)} \to U_n$ and  $s_n \colon   U_{\psi(n)} \to T_n$ be defined by $r_n = F(f_n)$ and $s_n = F(g_n)$ where $F$ is the functor from Lemma~\ref{lem: silly}. 
	By that lemma   we obtain   commutative diagrams of maps  between sets
	\begin{center}
		\begin{tikzcd}
			T_{\phi(k)} \arrow[r, "r_k"] \arrow[d, "p_{\phi(k), \phi(n)}"'] & U_k \arrow[d, "q_{k,n}"] \\
			T_{\phi(n)} \arrow[r, "r_n"] & U_n
		\end{tikzcd}
		\qquad
		\begin{tikzcd}
			U_{\psi(k)} \arrow[r, "s_k"] \arrow[d, "q_{\psi(k), \psi(n)}"'] & T_k \arrow[d, "q_{k,n}"] \\
			U_{\psi(n)} \arrow[r, "s_n"] & T_n
		\end{tikzcd}
	\end{center} Let $\Phi \colon [T] \to [U]$ and $\Psi \colon [U] \to [T]$ be defined by 
	\[ \Phi(x) = \bigcup_n r_n( x \upharpoonright \phi(n)) \qquad  \Psi(y) = \bigcup_n s_n( y \upharpoonright \psi(n)). \] Then $\Phi$ and $\Psi$ are uniformly continuous  by \ref{lem:basic}.   By (\ref{eqn:nice}) and \ref{lem: silly} we have $p_{\phi(\psi(n)), n}= s_n \circ r_{\psi(n)}$ and $q_{\psi(\phi(n)), n}= r_n \circ s_{\phi(n)}$ for each $n$. Hence  $\Psi \circ \Phi$ is the identity on $[T]$, and $\Phi \circ \Psi$ is the identity on $[U]$. So $[T] \cong_u [U]$  as required.
\end{proof}

\end{document}